\newenvironment{symenum}
{\enumerate[label=\textbf{\noexpand\thisenumsymbol}, align=left, labelindent=0pt, itemindent=-40pt, labelsep=5pt, leftmargin=*]}
{\endenumerate}
\newcommand\thisenumsymbol{}
\newcommand\itemsymbol[1]{\renewcommand{\thisenumsymbol}{#1}\item
}
\theoremstyle{plain}
\newtheorem{thm}{Theorem}
\newtheorem{lemma}[thm]{Lemma}
\newtheorem{cor}[thm]{Corollary}
\theoremstyle{definition}
\newcounter{casenum}
\DeclareMathOperator{\ei}{\ell_{\infty}}
\NewDocumentCommand{\makemathbox}{O{\width} m}{%
  \def\makemathbox@##1##2{\makebox[#1]{$##1##2$}}%
  \mathpalette\makemathbox@{#2}%
}
\newcommand{\p}[3]{\ifthenelse{\equal{#3}{}}{p_{[#1|#2]}}{p_{[#1|#2|#3]}}}
\newcommand{\cO}{\mathcal{O}}
\newcommand{\cF}{\mathcal{F}}
\def\final{0}  
\def\iflong{\iffalse}
\newcommand{\krnote}[1]{{\color{red}[{\tiny \textbf{Krist{\'o}f:} \bf #1}]\marginpar{\color{red}*}}}
\newcommand{\kinote}[1]{{\color{teal}[{\tiny \textbf{Kitti:} \bf #1}]\marginpar{\color{teal}*}}}
\newcommand{\lnote}[1]{{\color{purple}[{\tiny \textbf{Lydia:} \bf #1}]\marginpar{\color{purple}*}}}
\newcommand{\krnote}[1]{}
\newcommand{\kinote}[1]{}
\newcommand{\lnote}[1]{}
\title{Newton-type algorithms for inverse optimization I:\\
weighted bottleneck Hamming distance and $\ell_\infty$-norm objectives}
\date{}
\author{Krist{\'o}f B{\'e}rczi} 
\author{Lydia Mirabel Mendoza-Cadena}
\author{Kitti Varga}
\affil{{\footnotesize MTA-ELTE Momentum Matroid Optimization Research Group and ELKH-ELTE Egerv\'ary Research Group, Department of Operations Research, E\"otv\"os Lor\'and University, Budapest, Hungary. Email: \texttt{kristof.berczi@ttk.elte.hu, lmmendoza@proton.me, vkitti@math.bme.hu}.}}
\begin{document}
\maketitle

\begin{abstract}
In minimum-cost inverse optimization problems, we are given a feasible solution to an underlying optimization problem together with a linear cost function, and the goal is to modify the costs by a small deviation vector so that the input solution becomes optimal.

The difference between the new and the original cost functions can be measured in several ways. In this paper, we focus on two objectives: the weighted bottleneck Hamming distance and the weighted $\ei$-norm. We consider a general model in which the coordinates of the deviation vector are required to fall within given lower and upper bounds. For the weighted bottleneck Hamming distance objective, we present a simple, purely combinatorial algorithm that determines an optimal deviation vector in strongly polynomial time. For the weighted $\ei$-norm objective, we give a min-max characterization for the optimal solution, and provide a pseudo-polynomial algorithm for finding an optimal deviation vector that runs in strongly polynomial time in the case of unit weights. For both objectives, we assume that an algorithm with the same time complexity for solving the underlying combinatorial optimization problem is available. 

For both objectives, we also show how to extend the results to inverse optimization problems with multiple cost functions.
\medskip

\noindent \textbf{Keywords:} Algorithm, Bottleneck Hamming distance, Infinite norm, Inverse optimization, Min-max theorem

\end{abstract}

\section{Introduction}
\label{sec:intro}

Inverse optimization problems have long been the focus of research due to their wide applicability in both theory and practice. The roots of inverse optimization go back to the work of Burton and Toit~\cite{burton1992instance} who studied the inverse shortest paths problem, that is, the problem of recovering the edge costs given some information about the shortest paths in the graph. Since their pioneering work, countless of applications and extensions emerged; we refer the interested reader to \cite{richter2016inverse} for the basics and to \cite{heuberger2004inverse,demange2014introduction} for surveys. 

In a classical optimization problem, we are given a set of feasible solutions together with a linear cost function, and the goal is to find a feasible solution that minimizes or maximizes the cost. In contrast, in an inverse optimization problem we are also given a fixed feasible solution, and the goal is to modify the costs `as little as possible' so that the input solution becomes optimal. There are various ways to measure the deviation of the new cost function from the original one, and, as one would expect, the choice of the objective greatly affects the complexity of the problem. In order to avoid confusion, we refer to the solutions of the inverse optimization problem and those of the underlying combinatorial optimization problem as \emph{feasible deviation vectors} and \emph{solutions}, respectively.

In the past decades, inverse optimization problems found numerous applications. As an example, let us briefly describe the \emph{Pathway concordance problem}~\cite{chan2021Pathway}. A clinical pathway describes a standardized sequence of steps for managing a clinical process in the delivery of care for a specific disease, with the aim of optimizing the outcome on a patient or population-level. These processes are determined by multidisciplinary medical experts, and have been shown to efficiently improve e.g.\ patient survival and satisfaction, wait times, and cost of care. However, patients' journeys through the healthcare system can differ significantly from the recommended pathways, which raises the problem of measuring the concordance of patient-traversed pathways against the recommended ones. The problem can be modeled by a directed graph whose vertices correspond to activities that the patient can undertake, and the arcs indicate that a patient went from one activity to another. The `cost' of a patient undertaking or missing certain activities and traversing arcs can be modeled by arc costs. The goal is to determine arc costs such that the reference pathways are optimal, that is, they are shortest paths between the corresponding start and end vertices. Then, assuming such arc costs are available, the journey of any patient can be scored based on the cost of the associated directed walk through the network. 

The present work is the first member of a series of papers. Our general goal is to give min-max characterizations and simple algorithms for inverse optimization problems under various objectives. Here we set up the basics of a general framework that uses a Newton-type approach for finding an optimal deviation vector, and derive algorithms for the weighted bottleneck Hamming distance and weighted $\ei$-norm objectives that follow the proposed scheme. 

In the second part~\cite{berczi2023span}, we focus on a novel objective called the weighted span that aims at finding a `balanced' or `fair' deviation vector, and we propose an analogous algorithm for that objective. Nevertheless, the analysis of the algorithm there is much more involved due to the different nature of the span compared to the $\ei$-norm. 

\paragraph*{Previous work.}

Inverse optimization under the weighted bottleneck Hamming distance objective\footnote{This objective is sometimes called weighted \emph{bottleneck-type} Hamming distance objective in the literature.} has been of great interest recently. One of the earliest results is due to Duin and Volgenant~\cite{duin2006inverse} who considered the inverse minimum spanning tree, the inverse shortest path tree and the linear assignment problems. Liu and Yao~\cite{liu2013weighted} gave a strongly polynomial algorithm for the weighted inverse maximum perfect matching problem. An algorithm with an improved running time based on a binary search technique was later given by Tayyebi~\cite{tayyebi2019inverse}, and an analogous algorithm for the inverse matroid problem was given by Aman, Hassanpour and Tayyebi~\cite{aman2016matroid}. Mohaghegh and Baroughi Bonab~\cite{mohaghegh2016inverse} showed that the inverse min-max spanning $r$-arborescence problem is solvable in strongly polynomial time. Guan, He, Pardalos and Zhang~\cite{guan2017inverse} presented a mathematical model for the inverse max+sum spanning tree problem, together with a method to check feasibility and a binary search algorithm for solving it. Karimi, Aman and Dolati~\cite{karimi2017multi} studied the inverse shortest $s$-$t$ path problem and provided an LP-based algorithm which can be applied for some inverse multiobjective problem as well. Tayyebi and Aman~\cite{tayyebi2018bottleneck} considered a general inverse linear programming problem, and proposed an algorithm that is based on a binary search technique. As an application, they specialized the method for solving the corresponding inverse minimum-cost flow problem in strongly polynomial time. Nguyen and Hung~\cite{nguyen2020pmedian} studied the so-called inverse connected $p$-median problem under the unweighted bottleneck Hamming distance objective. In this problem, the goal is to modify vertex weights of a block graph at minimum total cost so that a predetermined set of $p$ connected vertices becomes a connected $p$-median on the perturbed block graph. They formulated the problem as a quasiconvex univariate optimization problem, and developed a combinatorial algorithm that solves the problem in polynomial time. Jiang, Liu and Peng~\cite{jiang2021constrained} presented a strongly polynomial algorithm for the inverse minimum flow problem. Dong, Li and Yang~\cite{dong2022partial} addressed the partial inverse min-max spanning tree problem, and presented two algorithms to solve the problem in polynomial time. 

Inverse problems under the $\ell_{\infty}$-norm have been studied in various settings. Xiaoguang~\cite{xiaoguang1998note} considered the inverse optimization problem of submodular functions on digraphs, and gave an LP-based algorithm that solves most inverse network optimization problems in polynomial time. Zhang and Liu~\cite{ZHANG1999Further} suggested a method for solving a general inverse LP problem including upper and lower bound constraints. In a later paper~\cite{liu2003inverse}, the same authors studied the inverse maximum-weight matching problem in non-bipartite graphs under the $\ell_\infty$-norm objective. They showed that the problem can be formulated as a maximum-mean alternating cycle problem in an undirected network, and can be solved in polynomial time by a binary search algorithm and in strongly polynomial time by an ascending algorithm. Using LP descriptions, Ahuja and Orlin~\cite{Ahuja2001Inverse} proved that if an optimization problem can be modeled as an LP, then the same holds for the underlying inverse optimization problem under $\ell_1$- or $\ell_\infty$-norm objectives. Furthermore, if the optimization problem is polynomially solvable for linear cost functions, then the inverse counterparts with $\ell_1$- and $\ell_{\infty}$-norms are also polynomially solvable. In \cite{zhang2002general}, Zhang and Liu proposed a model that generalizes numerous inverse combinatorial optimization problems when no bounds are given on the coordinates of the deviation vector. Yang and Zhang~\cite{yang2007some} presented strongly polynomial algorithms to solve the inverse min-max spanning tree and the inverse maximum capacity path problems when bounds are also given on the coordinates of the deviation vector. Lasserre~\cite{lasserre2013inverse} considered the inverse optimization problem associated with the polynomial program and a given current feasible solution, and provided a systematic numerical scheme to compute an inverse optimal solution. Ahmadian, Bhaskar, Sanit\`a, and Swamy~\cite{ahmadian2018algorithms} studied integral inverse optimization problems from an approximation point of view. They obtained tight or nearly-tight approximation guarantees for various inverse optimization problems, and some of their results apply for $\ell_{\infty}$-norm as well. Zhang, Guan, and Zhang~\cite{zhang2020inverse} provided a mathematical model of the inverse spanning tree problem, gave a characterization of optimal solutions, and developed a strongly polynomial algorithm for determining an optimal deviation vector. Recently, the authors~\cite{berczi2022multiple} introduced inverse optimization problems with multiple cost functions, and studied the inverse minimum-cost $s$-$t$ path, $r$-arborescence, and bipartite perfect matching problems.

Most papers on inverse optimization consider algorithmic aspects, and so they do not provide a min-max characterization for the optimum value in question. Recently, Frank and Murota~\cite{frank2021discrete} developed a general min-max formula for the minimum of an integer-valued separable discrete convex function, where the minimum is taken over the set of integral elements of a box total dual integral polyhedron. Their approach covers and even extends a wide class of inverse combinatorial optimization problems. Nevertheless, our problems do not fit in the box-TDI framework as neither the bottleneck Hamming distance nor the $\ell_\infty$-norm is separable convex.

\paragraph*{Problem definitions.}

We denote the sets of \emph{real} and \emph{positive real} numbers by $\mathbb{R}$ and $\mathbb{R}_+$, respectively. For a positive integer $k$, we use $[k]\coloneqq \{1,\dots,k\}$. Let $S$ be a ground set of size~$n$. Given subsets $X,Y\subseteq S$, the \emph{symmetric difference} of $X$ and $Y$ is denoted by $X\triangle Y\coloneqq (X\setminus Y)\cup(Y\setminus X)$. For a weight function $w\in\mathbb{R}_+^S$, the total sum of its values over $X$ is denoted by $w(X)\coloneqq \sum \{ w(s) \mid s\in X \}$, where the sum over the empty set is always considered to be $0$. Furthermore, we define $\frac{1}{w}(X)\coloneqq \sum \big\{ \frac{1}{w(s)} \bigm| s\in X \big\}$, and set $\|w\|_{-1}\coloneqq \frac{1}{w}(S)$. When the weights are rational numbers, then the values can be re-scaled as to satisfy $1/w(s)$ being an integer for each $s\in S$. Throughout the paper, we assume that $w$ is given in such a form  without explicitly mentioning it, implying that $\frac{1}{w}(X)$ is a non-negative integer for every $X\subseteq S$. By convention, we define $\min\{\emptyset\}=+\infty$ and $\max\{\emptyset\}=-\infty$.

Let $S$ be a finite ground set, $\cF \subseteq 2^S$ be a collection of \emph{feasible solutions} for an underlying optimization problem, $F^* \in \cF$ be an \emph{input solution}, $c \in \mathbb{R}^S$ be a \emph{cost function}, $w\in \mathbb{R}_+^S$ be a positive \emph{weight function}, and $\ell\colon S\to\mathbb{R}\cup\{-\infty\}$ and $u \colon S\to\mathbb{R}\cup\{+\infty\}$ be \emph{lower and upper bounds}, respectively, such that $\ell \leq u $. We assume that an \emph{oracle} $\cO$ is also available that determines an optimal solution of the underlying optimization problem $(S, \cF, c')$ for any cost function $c'\in\mathbb{R}^S$.

In the \emph{constrained minimum-cost inverse optimization problem under the weighted bottleneck Hamming distance objective} $\big( S, \cF, F^*, c, \ell, u, \mathrm{H}_{\infty,w}(\cdot) \big)$, we seek a \emph{deviation vector} $p \in \mathbb{R}^S$ such that 
\begin{enumerate}[label=(\alph*)]\itemsep0em
\item \label{it:a} $F^*$ is a minimum cost member of $\cF$ with respect to $c-p$,
\item \label{it:b} $p$ is within the bounds $\ell \leq p\leq u$, and
\item \label{it:c} $\mathrm{H}_{\infty,w}(p) \coloneqq \max \left\{ w(s)\mid s \in S, \, p(s) \ne 0 \right\}$ is minimized.
\end{enumerate}
In the \emph{constrained minimum-cost inverse optimization problem under weighted $\ell_\infty$-norm objective} $( S, \cF, F^*, c, \ell, u, \| \cdot \|_{\infty, w})$, condition~\ref{it:c} modifies to
\begin{enumerate}[label=(\alph*')]\itemsep0em\setcounter{enumi}{2}
\item \label{it:c'} $\|p\|_{\infty, w} \coloneqq \max \left\{ w(s) \cdot |p(s)| \bigm| s \in S \right\}$
\end{enumerate}
Due to the lower and upper bounds $\ell$ and $u$, it might happen that there exists no deviation vector $p$ satisfying the requirements. A deviation vector is called \emph{feasible} if it satisfies conditions~\ref{it:a} and~\ref{it:b}, and \emph{optimal} if in addition it attains the minimum in~\ref{it:c} or~\ref{it:c'}. We denote the problems by $\big( S, \cF, F^*, c, -\infty, +\infty, \mathrm{H}_{\infty,w}(\cdot) \big)$ and $( S, \cF, F^*, c, -\infty, +\infty,\|\cdot\|_{\infty, w} )$ when no bounds are given on the coordinates of $p$ at all, and call these problems \emph{unconstrained}. 

As an extension, we also consider \emph{multiple underlying optimization} problems at the same time. In this setting, instead of a single cost function, we are given $k$ cost functions $c^1,\dots,c^k$ together with an input solution $F^*$, and our goal is to find a single deviation vector $p$ with $\ell \leq p \leq u$ such that $F^*$ has minimum cost with respect to $c^j-p$ for all $j \in [k]$. In other words, condition~\ref{it:a} modifies to
\begin{enumerate}[label=(\alph*')]
\item \label{it:a'} $F^*$ is a minimum cost member of $\cF$ with respect to $c^j-p$ for $j\in[k]$.
\end{enumerate}
In case of multiple cost functions, we use $\{c^j\}_{j\in[k]}$ instead of $c$ when denoting the problems.

\paragraph*{Our results.}

Our main results are simple, purely combinatorial algorithms that efficiently solve the above, general problems. For the weighted bottleneck Hamming distance objective, we present an algorithm that makes $O(n)$ calls to the oracle $\cO$. In particular, the algorithm runs in strongly polynomial time, assuming that a strongly polynomial algorithm for the underlying optimization problem is available.

For the weighted $\ei$-norm objective, we give an algorithm for finding an optimal deviation vector that makes $O(n\cdot\|w\|_{-1})$ calls to the oracle $\cO$. In particular, the algorithm runs in strongly polynomial time for unit weights if the oracle $\cO$ for the underlying optimization problem can be realized by a strongly polynomial algorithm. Furthermore, we provide a min-max characterization for the minimum size of an optimal deviation vector in the unconstrained setting, i.e.\ when $\ell\equiv-\infty$ and $u \equiv+\infty$.

For both objectives, we show how to solve the problem when multiple cost functions are given instead of a single one.

The proposed algorithms do not rely on the standard techniques commonly used in the literature, i.e.\ binary search and LP-based methods. Instead, we suggest a Newton-type algorithm that iteratively updates the cost function, resembling the approach of Zhang and Liu~\cite{zhang2002general} for the $\ei$-norm objective. They showed that if the inverse optimization problem can be reformulated as a certain maximization problem using dominant sets, then Radzik's method~\cite{radzik1993parametric} provides a strongly polynomial algorithm for finding an optimal solution. In contrast, our algorithms apply to general inverse optimization problems. Furthermore, we consider the constrained setting in which the coordinates of the deviation vector are ought to fall within given lower and upper bounds, hence the cost function has to be updated carefully. For these reasons, Radzik's method cannot be applied to get a strongly polynomial algorithm. 

A high-level description of the algorithm is given by the following scheme.
\begin{symenum}\itemsep0em
\itemsymbol{Step~1.} Choose $p_0$ minimizing the objective such that $\ell\leq p_0\leq u$, set $c_0\coloneqq c-p_0$ and $i \coloneqq 0$.
\itemsymbol{Step~2.} Let $F_i$ be an optimal solution of the underlying optimization problem with respect to~$c_i$.
\itemsymbol{Step~3.} If $c_i(F^*)=c_i(F_i)$, then $p_i$ is an optimal deviation vector and stop. Otherwise, find $p_{i+1}$ satisfying $\ell\leq p_{i+1}\leq u$ and $(c-p_{i+1})(F^*)=(c-p_{i+1})(F_i)$, and minimizing the objective. If no such $p_{i+1}$ exists, then the problem is infeasible and stop. Otherwise set $i \gets i+1$ and go back to Step~2.
\end{symenum}

\medskip

The rest of the paper is organized as follows. Section~\ref{sec:bottle} presents a strongly polynomial algorithm for the weighted bottleneck Hamming distance objective, including the case of multiple cost functions. The weighted $\ei$-norm objective is discussed in Section~\ref{sec:infty}, where first we provide a min-max characterization for the weighted $\ei$-norm of an optimal deviation vector in the unconstrained setting, then give an algorithm for the constrained setting, including the case of multiple cost functions.

\section{Weighted bottleneck Hamming distance objective}
\label{sec:bottle}

As an illustration of our technique, we first consider the problem of minimizing the weighted bottleneck Hamming distance of the original and the modified cost functions. In Section~\ref{sec:spec_ham}, we show that there exists an optimal deviation vector having a restricted structure. We characterize the feasibility of the problem in Section~\ref{sec:feas_ham}. The algorithm for the case of a single cost function is presented in Section~\ref{sec:alg_ham}. We explain how to extend the algorithm for multiple cost functions in Section~\ref{sec:multi_ham}.

\subsection{Optimal deviation vectors}
\label{sec:spec_ham}

Consider an instance $\big( S, \cF, F^*, c, \ell, u, \mathrm{H}_{\infty,w}(\cdot) \big)$ of the constrained minimum-cost inverse optimization problem under the weighted bottleneck Hamming distance objective, where ${w\in\mathbb{R}^S_+}$ is a positive weight function. For ease of discussion, we define
\begin{equation*}
m \coloneqq \max\left\{0, ~ \max_{F \in \cF} \left( c(F^*) - c(F) - \sum_{\substack{s \in F^*\setminus F \\ u(s) < 0}} u(s) + \sum_{\substack{s \in F\setminus F^* \\ \ell(s) > 0}} \ell(s) \right)\right\} .
\end{equation*}
Recall that $\cO$ denotes an algorithm that determines an optimal solution of the underlying optimization problem $(S, \cF, c')$ for any cost function $c'$. Observe that if $\cO$ runs in strongly polynomial time, then the value of $m$ can be determined in strongly polynomial time. For any $\delta\geq 0$, let $\p{\delta}{\ell, u}{w}\colon S\to\mathbb{R}$ be defined as 
\begin{equation*}
\p{\delta}{\ell, u}{w}(s)\coloneqq 
\begin{cases}
u(s) & \text{if $s \in F^*$, $u(s) \ne + \infty$ and $w(s) \le \delta$}, \\
m & \text{if $s \in F^*$, $u(s) = + \infty$ and $w(s) \le \delta$}, \\
\ell(s) & \text{if $s \in S\setminus F^*$, $\ell(s) \ne - \infty$ and $w(s) \le \delta$}, \\
-m & \text{if $s \in S\setminus F^*$, $\ell(s) = - \infty$ and $w(s) \le \delta$}, \\
0 & \text{otherwise}.
\end{cases}
\end{equation*}

The following lemma shows that there exists an optimal deviation vector of special form.

\begin{lemma} \label{lem:inv_min_cost_bottleneck_delta}
Let $\big( S, \cF, F^*, c, \ell, u, \mathrm{H}_{\infty, w}(\cdot) \big)$ be a feasible minimum-cost inverse optimization problem and let $p$ be an optimal deviation vector. Then $\p{\delta}{\ell, u}{w}$ is also an optimal deviation vector, where $\delta \coloneqq \mathrm{H}_{\infty, w}(p)$.
\end{lemma}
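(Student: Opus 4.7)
The plan is to show that $\p{\delta}{\ell, u}{w}$ is a feasible deviation vector with weighted bottleneck Hamming distance at most $\delta$; combined with the optimality of $p$, this forces equality and yields optimality of $\p{\delta}{\ell, u}{w}$ as well. The bottleneck bound is immediate from the definition, since $\p{\delta}{\ell, u}{w}(s) = 0$ whenever $w(s) > \delta$, so every nonzero coordinate lies on an element of weight at most $\delta$. The bound constraint $\ell \le \p{\delta}{\ell, u}{w} \le u$ is verified case by case on the five branches of the definition; in the branches that take the values $u(s)$ or $\ell(s)$ the claim is immediate from $\ell \le u$, and in the branches using $\pm m$ the inequality $m \ge 0$ (which is built into the definition of $m$) together with the compatibility of the finite and infinite entries of $\ell$ and $u$ suffice.

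The main content is verifying that $F^*$ is a minimum-cost member of $\cF$ under $c - \p{\delta}{\ell, u}{w}$; equivalently, that for every $F \in \cF$
\[
\p{\delta}{\ell, u}{w}(F^*) - \p{\delta}{\ell, u}{w}(F) \;\ge\; c(F^*) - c(F).
\]
Fix such an $F$ and set $A \coloneqq \{s \in F^* \setminus F : w(s) \le \delta\}$ and $B \coloneqq \{s \in F \setminus F^* : w(s) \le \delta\}$. Since $\mathrm{H}_{\infty, w}(p) = \delta$, the vector $p$ vanishes on every element of weight exceeding $\delta$, so $p(F^*) - p(F) = p(A) - p(B)$, and the feasibility of $p$ yields $p(A) - p(B) \ge c(F^*) - c(F)$.

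The next step is to compare $\p{\delta}{\ell, u}{w}$ with $p$ on $A \cup B$. For $s \in A$ with $u(s) < +\infty$ we have $\p{\delta}{\ell, u}{w}(s) = u(s) \ge p(s)$, and for $s \in B$ with $\ell(s) > -\infty$ we have $\p{\delta}{\ell, u}{w}(s) = \ell(s) \le p(s)$. If every element of $A \cup B$ has finite bounds, summing these elementwise inequalities with the displayed feasibility of $p$ already gives the desired bound. Otherwise the left-hand side picks up a summand equal to $m$ for each element of $A$ with $u(s) = +\infty$ and equal to $m$ for each element of $B$ with $\ell(s) = -\infty$, and it suffices to verify the single inequality
\[
m \;\ge\; c(F^*) - c(F) \;-\; \sum_{s \in A,\, u(s) < +\infty} u(s) \;+\; \sum_{s \in B,\, \ell(s) > -\infty} \ell(s) .
\]
The definition of $m$ gives exactly this inequality but with the sums restricted to $u(s) < 0$ on $F^* \setminus F$ and $\ell(s) > 0$ on $F \setminus F^*$. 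Splitting $F^* \setminus F$ into $A$ and its high-weight complement, and similarly for $F \setminus F^*$, the difference between the two right-hand sides collapses to a combination of terms that are all nonpositive: contributions $u(s) \ge 0$ on $A$, contributions $u(s) < 0$ outside $A$, contributions $\ell(s) \le 0$ on $B$, and contributions $\ell(s) > 0$ outside $B$. Hence the unrestricted inequality follows from the restricted one, which completes the argument.

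The main obstacle is the organization of this case analysis: one must simultaneously split $F^* \triangle F$ by weight relative to $\delta$ and by finiteness of the bounds, and then recognize that the precisely chosen sign conditions $u(s) < 0$ and $\ell(s) > 0$ in the definition of $m$ are exactly what is needed to dominate the gap remaining after the elementwise comparison fails. Once this dictionary is in place the rest of the proof is bookkeeping.
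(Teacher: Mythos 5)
Your proposal is correct and follows essentially the same route as the paper's proof: feasibility of the bounds is read off the definition, condition~\ref{it:a} is reduced to an elementwise comparison between $\p{\delta}{\ell, u}{w}$ and $p$ on the low-weight part of $F^*\triangle F$ (using that $p$ vanishes where $w(s)>\delta$), with the definition of $m$ absorbing the gap when an infinite bound occurs, and optimality follows from $\mathrm{H}_{\infty,w}(\p{\delta}{\ell,u}{w})\le\delta$. The only divergence is cosmetic --- you dominate the extra terms in the comparison with the definition of $m$ by their signs, whereas the paper observes that $\ell(s)\le 0\le u(s)$ whenever $w(s)>\delta$ so that those terms simply coincide --- and the one point you leave vague (that $\ell(s)\le m$, resp.\ $-m\le u(s)$, in the branches where $u(s)=+\infty$, resp.\ $\ell(s)=-\infty$) is glossed over in exactly the same way by the paper, which asserts condition~\ref{it:b} ``by definition.''
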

\begin{proof}
The lower and upper bounds $\ell\leq\p{\delta}{\ell,u}{w}\leq u$ hold by definition, hence \ref{it:b} is satisfied.

Now we show that \ref{it:a} holds. Let $F \in \cF$ be an arbitrary solution. Then
\begin{align*}
&(c - \p{\delta}{\ell, u}{w})(F^*) - (c - \p{\delta}{\ell, u}{w})(F)\\[2pt]
{}&{}~~=
\left( c(F^*) - \sum_{s \in F^*} \p{\delta}{\ell, u}{w}(s) \right) - \left( c(F) - \sum_{s \in F} \p{\delta}{\ell, u}{w}(s) \right) \\[2pt]
{}&{}~~=
c(F^*) - c(F) - \sum_{s \in F^*\setminus F} \p{\delta}{\ell, u}{w}(s) + \sum_{s \in F\setminus F^*} \p{\delta}{\ell, u}{w}(s) \\[2pt]
{}&{}~~=
c(F^*) - c(F) - \sum_{\substack{s \in F^* \setminus F \\ u(s) \ne + \infty \\ w(s) \le \delta}} u(s) - \sum_{\substack{s \in F^*\setminus F \\ u(s) = + \infty \\ w(s) \le \delta}} m + \sum_{\substack{s \in F \setminus F^* \\ \ell(s) \ne - \infty \\ w(s) \le \delta}} \ell(s) + \sum_{\substack{s \in F \setminus F^* \\ \ell(s) = - \infty \\ w(s) \le \delta}} (-m) .
\end{align*}
If $\big\{ s \in F^*\setminus F \bigm| u(s) = + \infty,\ w(s)\leq\delta \big\} \cup \big\{ s \in F \setminus F^* \bigm| \ell(s) = - \infty,\ w(s)\leq\delta \big\} = \emptyset$, then
\begin{align*}
&(c - \p{\delta}{\ell, u}{w})(F^*) - (c - \p{\delta}{\ell, u}{w})(F)\\[2pt] 
{}&{}~~= 
c(F^*) - c(F) - \sum_{\substack{s \in F^*\setminus F \\ u(s) \ne + \infty \\ w(s) \le \delta}} u(s) + \sum_{\substack{s \in F \setminus F^* \\ \ell(s) \ne - \infty \\ w(s) \le \delta}} \ell(s) \\[2pt]
{}&{}~~\le 
c(F^*) - c(F) - \sum_{\substack{s \in F^*\setminus F \\ u(s) \ne + \infty \\ w(s) \le \delta}} p(s) + \sum_{\substack{s \in F \setminus F^* \\ \ell(s) \ne - \infty \\ w(s) \le \delta}} p(s)\\[2pt]
{}&{}~~= 
(c-p)(F^*) - (c-p)(F)\\[2pt]
{}&{}~~\le 
0.
\end{align*}

Otherwise $\big\{ s \in F^*\setminus F \bigm| u(s) = + \infty,\ w(s)\leq\delta \big\} \cup \big\{ s \in F \setminus F^* \bigm| \ell(s) = - \infty,\ w(s)\leq\delta \big\} \neq \emptyset$. Note that, by the feasibility of $p$ and by the definition of $\delta$, we have $\ell(s) \le 0 \le u(s)$ whenever $w(s) > \delta$. Thus we obtain
\begin{align*}
&(c - \p{\delta}{\ell, u}{w})(F^*) - (c - \p{\delta}{\ell, u}{w})(F)\\[2pt]
{}&{}~~=
c(F^*) - c(F) - \sum_{\substack{s \in F^*\setminus F \\ u(s) \ne + \infty \\ w(s) \le \delta}} u(s) - \sum_{\substack{s \in F^*\setminus F \\ u(s) = + \infty \\ w(s) \le \delta}} m
+ \sum_{\substack{s \in F \setminus F^* \\ \ell(s) \ne - \infty \\ w(s) \le \delta}} \ell(s) + \sum_{\substack{s \in F \setminus F^* \\ \ell(s) = - \infty \\ w(s) \le \delta}} (-m) \\[2pt]
{}&{}~~\le 
c(F^*) - c(F) - \sum_{\substack{s \in F^*\setminus F \\ u(s) \ne + \infty \\ w(s) \le \delta}} u(s) + \sum_{\substack{s \in F \setminus F^* \\ \ell(s) \ne - \infty \\ w(s) \le \delta}} \ell(s) - 1 \cdot m \\[2pt]
{}&{}~~\le
c(F^*) - c(F) - \sum_{\substack{s \in F^*\setminus F \\ u(s) < 0 \\ w(s) \le \delta}} u(s) + \sum_{\substack{s \in F \setminus F^* \\ \ell(s) > 0 \\ w(s) \le \delta}} \ell(s) - m \\[2pt]
{}&{}~~=
c(F^*) - c(F) - \sum_{\substack{s \in F^*\setminus F \\ u(s) < 0}} u(s) + \sum_{\substack{s \in F \setminus F^* \\ \ell(s) > 0}} \ell(s) - m\\[2pt]
{}&{}~~\le
0,
\end{align*}
where the last inequality holds by the definition of $m$. Therefore, \ref{it:a} is indeed satisfied.

Finally, to see \ref{it:c}, observe that $\mathrm{H}_{\infty, w}(\p{\delta}{\ell, u}{w}) \le \delta = \mathrm{H}_{\infty, w}(p)$, hence $\p{\delta}{\ell, u}{w}$ is also optimal.
\end{proof}

By Lemma~\ref{lem:inv_min_cost_bottleneck_delta}, it suffices to look for the optimal deviation vector among vectors of special form. It turns out that the value of $\delta$ can be chosen from the values of the weight function $w$.

\begin{lemma} \label{lem:inv_min_cost_bottleneck_observation_on_delta}
 Let $\big( S, \cF, F^*, c, \ell, u, \mathrm{H}_{\infty, w}(\cdot) \big)$ be a feasible minimum-cost inverse optimization problem and let $\delta \ge 0$ be such that $\p{\delta}{\ell, u}{w}$ is a feasible deviation vector. Then the following hold.
 \begin{enumerate}[label=(\roman*)]\itemsep0em
  \item \label{bott_obs:a} There exists $s \in S$ with $w(s) \le \delta$ for which $\p{w(s)}{\ell, u}{w}$ is also a feasible deviation vector.
  \item \label{bott_obs:b} For any $\delta' \ge \delta$, the deviation vector $\p{\delta'}{\ell, u}{w}$ is also feasible.
 \end{enumerate}
\end{lemma}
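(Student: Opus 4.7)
The key observation is that $\p{\delta}{\ell,u}{w}$ depends on $\delta$ only through the level set $S_\delta \coloneqq \{s \in S : w(s) \le \delta\}$: increasing $\delta$ changes the vector only when it crosses some value $w(s)$. Consequently, for~\ref{bott_obs:a} I would pick $s^* \in S_\delta$ attaining $\max\{w(s) : s \in S_\delta\}$ (assuming $S_\delta \ne \emptyset$; otherwise $\p{\delta}{\ell,u}{w} \equiv 0$ is feasible, meaning $F^*$ is already optimal with respect to $c$ and the claim is degenerate). Then $S_{w(s^*)} = S_\delta$, so $\p{w(s^*)}{\ell,u}{w} = \p{\delta}{\ell,u}{w}$, which is feasible by hypothesis.

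For~\ref{bott_obs:b}, the plan is to rerun the argument from the proof of Lemma~\ref{lem:inv_min_cost_bottleneck_delta} with $\delta'$ in place of $\delta$ and with $p \coloneqq \p{\delta}{\ell,u}{w}$ playing the role of the given feasible deviation vector. I would split the bound verification into three regimes: elements with $w(s) \le \delta$ inherit the bounds from $\p{\delta}{\ell,u}{w}$ directly; elements with $w(s) > \delta'$ receive value $0$ and satisfy $\ell(s) \le 0 \le u(s)$ because feasibility of $\p{\delta}{\ell,u}{w}$ already forces this for every $s$ with $w(s) > \delta$; and elements with $\delta < w(s) \le \delta'$ are newly activated, in which case I check that the new value---one of $u(s)$, $m$, $\ell(s)$, or $-m$---lies in $[\ell(s), u(s)]$ using $m \ge 0$ together with $\ell(s) \le 0 \le u(s)$.

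Condition~\ref{it:a} for $\p{\delta'}{\ell,u}{w}$ would then be verified by essentially copying the calculation from Lemma~\ref{lem:inv_min_cost_bottleneck_delta} verbatim, with $\delta$ replaced by $\delta'$ and $p$ replaced by $\p{\delta}{\ell,u}{w}$; the bounds $u(s) \ge \p{\delta}{\ell,u}{w}(s)$ and $\ell(s) \le \p{\delta}{\ell,u}{w}(s)$ used there remain valid because $\p{\delta}{\ell,u}{w}$ is in bounds. The one spot that requires care---and the main obstacle---is the case where the `infinite-bound' index set $\{s \in F^* \setminus F : u(s) = +\infty,\ w(s) \le \delta'\} \cup \{s \in F \setminus F^* : \ell(s) = -\infty,\ w(s) \le \delta'\}$ is nonempty; as in the original proof, the $-m$ contributions absorb the remaining terms via the defining inequality of $m$. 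Since every bound I invoke is one that $\p{\delta}{\ell,u}{w}$ itself enjoys, its feasibility alone suffices.
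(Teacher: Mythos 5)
Your proposal is correct. Part~\ref{bott_obs:a} is exactly the paper's argument: pick $s^*$ maximizing $w$ over $\{s \in S : w(s)\le\delta\}$ and observe $\p{w(s^*)}{\ell,u}{w}=\p{\delta}{\ell,u}{w}$; your explicit handling of the empty level set is a detail the paper silently skips (both proofs implicitly assume some element has weight at most $\delta$, as the statement itself requires). For part~\ref{bott_obs:b} you take a slightly longer route than the paper. The paper expands $(c-\p{\delta'}{\ell,u}{w})(F^*)-(c-\p{\delta'}{\ell,u}{w})(F)$ and compares it term by term with the same expression for $\delta$: the only new terms come from elements with $\delta<w(s)\le\delta'$, and each such term ($-u(s)$, $-m$, $+\ell(s)$, or $-m$) is nonpositive because feasibility of $\p{\delta}{\ell,u}{w}$ forces $\ell(s)\le 0\le u(s)$ there and $m\ge 0$; this yields $\le(c-\p{\delta}{\ell,u}{w})(F^*)-(c-\p{\delta}{\ell,u}{w})(F)\le 0$ in one chain, with no case distinction and no further appeal to the definition of $m$. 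You instead rerun the two-case proof of Lemma~\ref{lem:inv_min_cost_bottleneck_delta} with $\p{\delta}{\ell,u}{w}$ in the role of the reference feasible vector; this works --- the hypotheses you need ($\p{\delta}{\ell,u}{w}(s)=0$ and $\ell(s)\le 0\le u(s)$ for $w(s)>\delta'\ge\delta$, plus the bounds $\ell\le\p{\delta}{\ell,u}{w}\le u$) all follow from feasibility of $\p{\delta}{\ell,u}{w}$, and in the nonempty infinite-bound case the $-m$ term again absorbs everything via the defining inequality of $m$. The outcome is the same; the paper's direct monotonicity comparison is just a bit leaner, while your version has the minor virtue of also spelling out why condition~\ref{it:b} holds for the newly activated coordinates, which the paper leaves implicit.
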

\begin{proof}
To see \ref{bott_obs:a}, let $s \in S$ be such an element for which $w(s) = \max \{ w(s') \mid s' \in S, \, w(s') \le \delta \}$ holds. Then by definition, we have $\p{\delta}{\ell, u}{w} = \p{w(s)}{\ell, u}{w}$.

For \ref{bott_obs:b}, let $F \in \mathcal{F}$ be arbitrary. Note that, by the feasibility of $\p{\delta}{\ell, u}{w}$, we have $\ell(s) \le 0 \le u(s)$ whenever $w(s) > \delta$. Then
\begin{align*}
&(c - \p{\delta'}{\ell, u}{w})(F^*) - (c - \p{\delta'}{\ell, u}{w})(F)\\[2pt]
{}&{}~~=
c(F^*) - c(F) - \sum_{\substack{s \in F^*\setminus F \\ u(s) \ne + \infty \\ w(s) \le \delta'}} u(s) - \sum_{\substack{s \in F^*\setminus F \\ u(s) = + \infty \\ w(s) \le \delta'}} m
+ \sum_{\substack{s \in F \setminus F^* \\ \ell(s) \ne - \infty \\ w(s) \le \delta'}} \ell(s) + \sum_{\substack{s \in F \setminus F^* \\ \ell(s) = - \infty \\ w(s) \le \delta'}} (-m) \\[2pt]
{}&{}~~\le
c(F^*) - c(F) - \sum_{\substack{s \in F^*\setminus F \\ u(s) \ne + \infty \\ w(s) \le \delta}} u(s) - \sum_{\substack{s \in F^*\setminus F \\ u(s) = + \infty \\ w(s) \le \delta}} m
+ \sum_{\substack{s \in F \setminus F^* \\ \ell(s) \ne - \infty \\ w(s) \le \delta}} \ell(s) + \sum_{\substack{s \in F \setminus F^* \\ \ell(s) = - \infty \\ w(s) \le \delta}} (-m) \\[2pt]
{}&{}~~=
(c - \p{\delta}{\ell, u}{w})(F^*) - (c - \p{\delta}{\ell, u}{w})(F)\\[2pt]
{}&{}~~\le
0,
\end{align*}
concluding the proof of the lemma.
\end{proof}

\subsection{Characterizing feasibility}
\label{sec:feas_ham}

We give a necessary and sufficient condition for the feasibility of the minimum-cost inverse optimization problem $\big( S, \cF, F^*, c, \ell,\allowbreak u, \mathrm{H}_{\infty,w}(\cdot) \big)$.

\begin{lemma} \label{lem:inv_min_cost_bottleneck_n&s_cond_for_infeas}
Let $\big( S, \cF, F^*, c, \ell, u, \mathrm{H}_{\infty, w}(\cdot) \big)$ be a minimum-cost inverse optimization problem. Then the problem is feasible if and only if $\p{w_{\max}}{\ell, u}{w}$ is a feasible deviation vector, where
$w_{\max} \coloneqq \max \left\{ w(s) \bigm| s \in S \right\}$.
\end{lemma}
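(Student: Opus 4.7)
The proof naturally splits into the "if" and "only if" directions. The "if" direction is immediate from the definition of feasibility: if $\p{w_{\max}}{\ell, u}{w}$ satisfies conditions \ref{it:a} and \ref{it:b}, then the problem trivially admits a feasible deviation vector. My plan for the "only if" direction is to first produce some $\delta \in [0, w_{\max}]$ for which $\p{\delta}{\ell, u}{w}$ is feasible, and then apply Lemma~\ref{lem:inv_min_cost_bottleneck_observation_on_delta}\ref{bott_obs:b} with $\delta' := w_{\max}$ to upgrade this to feasibility of $\p{w_{\max}}{\ell, u}{w}$; the hypothesis $\delta' \ge \delta$ will be automatic because $w(s) \le w_{\max}$ for every $s \in S$.

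Producing such a $\delta$ is exactly the content of Lemma~\ref{lem:inv_min_cost_bottleneck_delta}. Assuming the problem is feasible, I would pick an optimal feasible deviation vector $p^*$ and set $\delta := \mathrm{H}_{\infty, w}(p^*)$. The existence of $p^*$ is not a delicate matter: the objective $\mathrm{H}_{\infty, w}(\cdot)$ takes values in the finite set $\{-\infty\} \cup \{w(s) \mid s \in S\}$, so if there is any feasible deviation vector at all then the minimum is attained. Lemma~\ref{lem:inv_min_cost_bottleneck_delta} applied to $p^*$ then directly gives that $\p{\delta}{\ell, u}{w}$ is again feasible (in fact optimal), and $\delta \le w_{\max}$ is clear from the definition of $\mathrm{H}_{\infty, w}$ together with the positivity of $w$.

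The only mild subtlety, and the main thing I would need to watch for, is the boundary case $p^* \equiv 0$, in which $\delta = \mathrm{H}_{\infty, w}(p^*) = -\infty < 0$, so the hypothesis $\delta \ge 0$ of Lemma~\ref{lem:inv_min_cost_bottleneck_observation_on_delta}\ref{bott_obs:b} is not directly available. The workaround is to take $\delta := 0$ in this case: since $w$ is strictly positive, the condition $w(s) \le 0$ fails for every $s \in S$, so by definition $\p{0}{\ell, u}{w} \equiv 0 = p^*$ is still feasible, and Lemma~\ref{lem:inv_min_cost_bottleneck_observation_on_delta}\ref{bott_obs:b} applies verbatim with $\delta = 0$ and $\delta' = w_{\max}$. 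Thus the statement will follow as a clean consequence of the two preceding lemmas, modulo this small piece of bookkeeping for the zero deviation vector.
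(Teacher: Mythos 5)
Your proposal is correct and follows essentially the same route as the paper: both obtain an optimal deviation vector $p$, invoke Lemma~\ref{lem:inv_min_cost_bottleneck_delta} to get feasibility of $\p{\delta}{\ell,u}{w}$ for $\delta=\mathrm{H}_{\infty,w}(p)\le w_{\max}$, and then apply Lemma~\ref{lem:inv_min_cost_bottleneck_observation_on_delta}\ref{bott_obs:b} to pass to $\p{w_{\max}}{\ell,u}{w}$ (the paper merely phrases this as a contradiction). Your extra bookkeeping for the case $p\equiv 0$, where $\mathrm{H}_{\infty,w}(p)=-\infty$, is a legitimate small refinement that the paper's proof leaves implicit.
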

\begin{proof}
Clearly, if $\p{w_{\max}}{\ell, u}{w}$ is feasible, then so is the problem.

To see the other direction, suppose to the contrary that $\p{w_{\max}}{\ell, u}{w}$ is not feasible, but there exists a feasible deviation vector $p$. If, in addition, $p$ is chosen to be optimal, then, by Lemma~\ref{lem:inv_min_cost_bottleneck_delta}, the deviation vector $\p{\delta}{\ell, u}{w}$ is also optimal for $\delta \coloneqq \mathrm{H}_{\infty, w}(p)$. Obviously, $\delta \le w_{\max}$ holds. By Lemma~\ref{lem:inv_min_cost_bottleneck_observation_on_delta}, this implies the feasibility of $\p{w_{\max}}{\ell, u}{w}$, a contradiction.
\end{proof}

\subsection{Algorithm}
\label{sec:alg_ham}

We turn to the description of the algorithm and its analysis. The high-level idea is as described in the introduction. In each iteration, we determine an optimal solution $F\in\cF$ using the oracle $\cO$ as a black box. If the cost of $F$ equals that of $F^*$, then we stop. Otherwise, we modify the costs in such a way that $F$ is ``eliminated'', that is, $F$ and $F^*$ share the same cost with respect to the modified cost function -- hence the name \emph{Newton-type}. The algorithm is presented as Algorithm~\ref{algo:inv_min_cost_bottleneck}.

\begin{algorithm}[!ht] 
\caption{Algorithm for the constrained minimum-cost inverse optimization problem under the weighted bottleneck Hamming distance objective}
\label{algo:inv_min_cost_bottleneck}
\DontPrintSemicolon

\KwIn{A minimum-cost inverse optimization problem $(S, \cF , F^*, c, \ell, u, \mathrm{H}_{\infty, w}(\cdot))$ and an oracle $\cO$ for the minimum-cost optimization problem $(S, \cF , c')$ with any cost function $c'$.
}
\KwOut{An optimal deviation vector if the problem is feasible, otherwise \texttt{Infeasible}.}

\smallskip

$\delta_0 \gets \max \big\{ 0,\, \max \{ w(s) \mid s \in S, \ell(s) > 0 \},\,\max \{ w(s) \mid s \in S, u(s) < 0 \} \big\}$\;
$c_0 \gets c - \p{\delta_0}{\ell, u}{w}$\;
$F_0 \gets \text{a minimum $c_0$-cost member of $\cF$ determined by $\cO$}$\;
$i \gets 0$\;
\While{$c_i(F^*) > c_i(F_i)$}{
$S_i \gets \left\{ s \in S \mid w(s) > \delta_i \right\}$\;
\uIf{$S_i = \emptyset$}{
\Return{\tt Infeasible}\;
}
\Else{
$\delta_{i+1} \gets \min \left\{ w(s) \mid s \in S_i \right\}$\;
}
$c_{i+1} \gets c - \p{\delta_{i+1}}{\ell, u}{w}$\;
$F_{i+1} \gets \text{a minimum $c_{i+1}$-cost member of $\cF$ determined by $\cO$}$\;
$i \gets i+1$\;
}
\Return{$\p{\delta_i}{\ell, u}{w}$}\;
\end{algorithm}

It remains to prove correctness and the running time of the algorithm.

\begin{thm} \label{thm:inv_min_cost_bottleneck_correctness_of_algo}
Algorithm~\ref{algo:inv_min_cost_bottleneck} determines an optimal deviation vector, if exists, for the minimum-cost inverse optimization problem $\big( S, \cF, F^*, c, \ell, u, \mathrm{H}_{\infty, w}(\cdot) \big)$ using $O(n)$ calls to the oracle $\cO$.
\end{thm}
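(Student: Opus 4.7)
The plan is to establish three things: the $O(n)$ bound on the number of iterations, correctness of the output when the algorithm returns a vector, and correctness when it returns \texttt{Infeasible}. The key structural observation that drives all three is that $\p{\delta}{\ell,u}{w}$ depends on $\delta$ only through the set $\{s\in S : w(s)\le\delta\}$, so as a function of $\delta$ it is piecewise constant with breakpoints exactly in $\{w(s)\mid s\in S\}$. The algorithm deliberately walks through these breakpoints in increasing order: $\delta_0$ is (up to the safety value $0$) a weight, and each step sets $\delta_{i+1}$ to $\min\{w(s)\mid s\in S,\,w(s)>\delta_i\}$. Consequently, the sequence $\delta_0<\delta_1<\dots$ consists of pairwise distinct values of $w$, giving at most $n$ iterations, each of which invokes $\cO$ once.

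For correctness when the main loop terminates at iteration $i$ with $c_i(F^*)\le c_i(F_i)$: since $F_i$ is a $c_i$-minimizer over $\cF$ and $F^*\in\cF$, equality holds, so $F^*$ is a $c_i$-minimizer, which is exactly condition~\ref{it:a} for $\p{\delta_i}{\ell,u}{w}$; condition~\ref{it:b} holds by construction. For optimality, I would show that no $\delta<\delta_i$ yields a feasible deviation vector of the canonical form. In each previous iteration $j<i$, the loop condition $c_j(F^*)>c_j(F_j)$ certifies that $\p{\delta_j}{\ell,u}{w}$ violates~\ref{it:a}; by the piecewise-constant observation, the same vector (and hence its infeasibility) is shared by every $\delta\in[\delta_j,\delta_{j+1})$. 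For $\delta<\delta_0$, either $\delta<0$ so some $s$ with $\ell(s)>0$ has $\p{\delta}{\ell,u}{w}(s)=0<\ell(s)$, or some $s$ with $w(s)>\delta$ and $\ell(s)>0$ (or $u(s)<0$) falls outside its bounds, so~\ref{it:b} fails. Hence $\p{\delta}{\ell,u}{w}$ is infeasible for every $\delta<\delta_i$. Now I invoke Lemma~\ref{lem:inv_min_cost_bottleneck_delta}: any feasible deviation vector $p$ yields a feasible $\p{\mathrm{H}_{\infty,w}(p)}{\ell,u}{w}$, forcing $\mathrm{H}_{\infty,w}(p)\ge\delta_i$, so the returned vector is optimal.

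For the \texttt{Infeasible} branch: the algorithm exits there only when $S_i=\emptyset$, which means $\delta_i\ge w(s)$ for all $s$, i.e.\ $\delta_i=w_{\max}$, while the loop condition gives $c_i(F^*)>c_i(F_i)$; hence $\p{w_{\max}}{\ell,u}{w}$ is infeasible, and Lemma~\ref{lem:inv_min_cost_bottleneck_n&s_cond_for_infeas} then certifies that the whole problem is infeasible.

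The main obstacle I expect is the optimality argument, specifically the need to bridge from infeasibility of $\p{\delta_j}{\ell,u}{w}$ at the discrete breakpoints $\delta_j$ to infeasibility for all real $\delta<\delta_i$. The piecewise-constant nature of $\delta\mapsto\p{\delta}{\ell,u}{w}$ handles this cleanly, and combined with Lemma~\ref{lem:inv_min_cost_bottleneck_delta} one reduces optimization over arbitrary feasible $p$ to optimization over canonical deviation vectors evaluated at weights; Lemma~\ref{lem:inv_min_cost_bottleneck_observation_on_delta}\ref{bott_obs:a} can be cited if a cleaner phrasing is desired. The remaining arguments are direct bookkeeping from the pseudocode.
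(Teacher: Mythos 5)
Your proposal is correct and follows essentially the same route as the paper: the $O(n)$ bound via the strictly increasing sequence of weight breakpoints (equivalently, the shrinking sets $S_i$), the \texttt{Infeasible} branch via Lemma~\ref{lem:inv_min_cost_bottleneck_n&s_cond_for_infeas}, and optimality by reducing to canonical vectors through Lemma~\ref{lem:inv_min_cost_bottleneck_delta} and ruling out all $\delta<\delta_i$ using the piecewise-constancy of $\delta\mapsto\p{\delta}{\ell,u}{w}$, which is exactly the content of Lemma~\ref{lem:inv_min_cost_bottleneck_observation_on_delta}\ref{bott_obs:a} (and \ref{bott_obs:b}) that the paper invokes. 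The only cosmetic difference is that you exclude each interval $[\delta_j,\delta_{j+1})$ separately, whereas the paper derives a single contradiction from the last failing iteration $q$; both are sound.
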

\begin{proof}
We discuss the time complexity and the correctness of the algorithm separately.
\medskip

\noindent \emph{Time complexity.} We show that the algorithm terminates after at most $n$ iterations of the while loop. To see this, observe that if $F^*$ is not a minimum $c_i$-cost member of $\cF$ for some $i$, then either $S_{i+1} \subsetneq S_i$ by the definition of $\delta_{i+1}$, or the algorithm declares the problem to be infeasible. As the size of the set $S_i$ can decrease at most $|S|=n$ times, the statement follows.
\medskip

\noindent \emph{Correctness.} By the above, the algorithm terminates after a finite number of iterations. Observe that if the algorithm returns \texttt{Infeasible}, then it correctly recognizes the problem to be infeasible by Lemma~\ref{lem:inv_min_cost_bottleneck_n&s_cond_for_infeas}.

Assume now that the algorithm terminates with returning a deviation vector $\p{\delta_i}{\ell, u}{w}$ whose feasibility follows from the fact that the while loop ended. If $F^*$ is a minimum $c_0$-cost member of $\cF$, then we are clearly done. Otherwise, there exists an index $q$ such that $F^*$ is a minimum $c_{q+1}$-cost member of $\cF$. Suppose to the contrary that $\p{\delta_{q+1}}{\ell, u}{w}$ is not optimal. Since the problem is feasible, by Lemma~\ref{lem:inv_min_cost_bottleneck_delta}, there exists $\delta<\delta_{q+1}$ such that the deviation vector $\p{\delta}{\ell, u}{w}$ is optimal. Note that $\p{\delta_q}{\ell, u}{w}$ is not a feasible deviation vector since $(c - \p{\delta_q}{\ell, u}{w})(F^*) > (c - \p{\delta_q}{\ell, u}{w})(F_q)$. By Lemma~\ref{lem:inv_min_cost_bottleneck_observation_on_delta}, we get $\delta_q < \delta < \delta_{q+1}$. However, by Lemma~\ref{lem:inv_min_cost_bottleneck_observation_on_delta}, we know that $\delta=w(s)$ for some $s\in S$, contradicting the definition of $\delta_{q+1}$.
\end{proof}

Note that the Algorithm~\ref{algo:inv_min_cost_bottleneck} runs in strongly polynomial time assuming that $\cO$ can be realized by a strongly polynomial-time algorithm.

\subsection{Multiple cost functions}
\label{sec:multi_ham}

Consider now an instance $\big( S, \cF, F^*, \{c^j\}_{j\in[k]}, \ell, u, \mathrm{H}_{\infty,w}(\cdot) \big)$ of the problem with multiple cost functions. By Lemma~\ref{lem:inv_min_cost_bottleneck_delta}, for each $j\in[k]$, there exists $\delta^j\geq 0$ such that $\p{\delta^j}{\ell, u}{w}$ is an optimal deviation vector for the problem $\big( S, \cF, F^*, c^j, \ell, u, \mathrm{H}_{\infty, w}(\cdot) \big)$. Let $\delta\coloneqq\max \big\{ \delta^j\mid j\in[k] \big\}$. By Lemma~\ref{lem:inv_min_cost_bottleneck_observation_on_delta}, $\p{\delta}{\ell, u}{w}$ is a feasible deviation vector for the problem $\big( S, \cF, F^*, c^j, \ell, u, \mathrm{H}_{\infty, w}(\cdot) \big)$ for $j\in[k]$, and it is clearly optimal. Therefore, we get the following.

\begin{cor}
 \hspace{-2pt}The minimum-cost inverse optimization problem $\big( S, \cF, F^*, \{ c^j \}_{j \in [k]}, \ell, u, \allowbreak \mathrm{H}_{\infty,w}(\cdot) \big)$ with multiple cost functions can be solved using $O(k\cdot n)$ calls to the oracle $\cO$.
\end{cor}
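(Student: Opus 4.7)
The plan is to reduce the multi-cost instance to $k$ single-cost instances solvable by Algorithm~\ref{algo:inv_min_cost_bottleneck}, and then combine the solutions via a simple maximum. Concretely, for each $j \in [k]$, I would invoke Algorithm~\ref{algo:inv_min_cost_bottleneck} on the instance $\big( S, \cF, F^*, c^j, \ell, u, \mathrm{H}_{\infty,w}(\cdot) \big)$. By Theorem~\ref{thm:inv_min_cost_bottleneck_correctness_of_algo}, each call uses $O(n)$ queries to the oracle $\cO$, giving the claimed total of $O(k \cdot n)$ oracle calls. If any of these single-cost instances is infeasible, the multi-cost problem is also infeasible, because any feasible deviation vector for the latter automatically satisfies condition~\ref{it:a'} for the chosen~$j$, together with the same bound constraints, and is therefore feasible for that single-cost instance.

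Otherwise, each run returns an optimal single-cost deviation vector of the form $\p{\delta^j}{\ell,u}{w}$ for some $\delta^j \ge 0$, as guaranteed by Lemma~\ref{lem:inv_min_cost_bottleneck_delta}. I would then output $\p{\delta}{\ell,u}{w}$ with $\delta \coloneqq \max \big\{ \delta^j \mid j \in [k] \big\}$. Feasibility of this vector for the multi-cost instance follows from Lemma~\ref{lem:inv_min_cost_bottleneck_observation_on_delta}\ref{bott_obs:b}: since $\delta \ge \delta^j$ for each $j$, the vector $\p{\delta}{\ell,u}{w}$ is a feasible deviation vector for every single-cost instance, so it makes $F^*$ a minimum $(c^j - \p{\delta}{\ell,u}{w})$-cost member of $\cF$ for all $j$, while the bound constraints $\ell \le \p{\delta}{\ell,u}{w} \le u$ are built directly into its definition.

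For optimality, any feasible deviation vector $p$ for the multi-cost instance is simultaneously feasible for each single-cost instance, hence $\mathrm{H}_{\infty,w}(p) \ge \delta^j$ for every $j$ by the optimality of $\delta^j$; taking the maximum over $j$ gives $\mathrm{H}_{\infty,w}(p) \ge \delta = \mathrm{H}_{\infty,w}(\p{\delta}{\ell,u}{w})$. No substantive obstacle arises: the entire argument rests on the monotonicity supplied by Lemma~\ref{lem:inv_min_cost_bottleneck_observation_on_delta}\ref{bott_obs:b}, which is precisely what makes the coordinatewise maximum of the per-cost thresholds the correct way to merge them. The only conceptual point to verify is that single-cost optimality transfers upward along the $\delta$-parameter, and this is exactly what that lemma provides.
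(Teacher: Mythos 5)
Your proposal is correct and follows essentially the same route as the paper: solve the $k$ single-cost instances, take $\delta \coloneqq \max_j \delta^j$, invoke Lemma~\ref{lem:inv_min_cost_bottleneck_observation_on_delta}\ref{bott_obs:b} for feasibility of $\p{\delta}{\ell,u}{w}$ with respect to every $c^j$, and note that any feasible multi-cost deviation vector is feasible for each single-cost instance, which forces optimality. Your explicit treatment of the infeasible case and the lower-bound argument $\mathrm{H}_{\infty,w}(p)\ge\delta^j$ only spell out details the paper leaves implicit.
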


\section{Weighted \texorpdfstring{$\ei$}{L-infinity}-norm objective}
\label{sec:infty}

Next we consider the weighted $\ei$-norm objective. Similarly to the case of the weighted bottleneck Hamming distance, we first prove that there exists an optimal deviation vector of a special form in Section~\ref{sec:spec_infty}. We characterize the feasibility of the problem in Section~\ref{sec:feas_infty}. Then we present an algorithm for the case of a single cost function in the constrained setting in Section~\ref{sec:alg_infty}. We explain how to extend the algorithm for multiple cost functions in Section~\ref{sec:multi_infty}. Finally, we give a min-max characterization of the weighted $\ei$-norm of an optimal deviation vector in the unconstrained setting with multiple cost functions in Section~\ref{sec:min-max_infty}. 

\subsection{Optimal deviation vectors}
\label{sec:spec_infty}

Consider an instance $( S, \cF, F^*, c, \ell, u, \|\cdot\|_{\infty,w})$ of the constrained minimum-cost inverse optimization problem under the weighted $\ei$-norm objective, where $w\in\mathbb{R}^S_+$ is a positive weight function. For any $\delta\geq 0$, let $\p{\delta}{\ell, u}{w}\colon S\to\mathbb{R}$ be defined as
\begin{equation*}
\p{\delta}{\ell, u}{w}(s)\coloneqq
\begin{cases}
\ell(s) & \text{if $s \in F^*$ and $\delta/w(s) < \ell(s)$}, \\
\delta/w(s) & \text{if $s \in F^*$ and $\ell(s) \le \delta/w(s) \le u(s)$}, \\
u(s) & \text{if $s \in F^*$ and $u(s) < \delta/w(s)$}, \\
\ell(s) & \text{if $s \in S\setminus F^*$ and $- \delta/w(s) < \ell(s)$}, \\
- \delta/w(s) & \text{if $s \in S\setminus F^*$ and $\ell(s) \le - \delta/w(s) \le u(s)$}, \\
u(s) & \text{if $s \in S\setminus F^*$ and $u(s) < - \delta/w(s)$}.
\end{cases}
\end{equation*}
We simply write $\p{\delta}{}{w}$ when $\ell\equiv-\infty$ and $u\equiv+\infty$. The following technical lemma shows that there exists an optimal deviation vector of special form.

\begin{lemma} \label{lem:inv_min_cost_infinity_delta}
Let $\left( S, \cF , F^*, c, \ell, u, \| \cdot \|_{\infty, w} \right)$ be a feasible minimum-cost inverse optimization problem and let $p$ be an optimal deviation vector.  Then $\p{\delta}{\ell, u}{w}$ is also an optimal deviation vector, where $\delta \coloneqq  \max \left\{ w(s) \cdot | p(s) | \bigm| s \in S \right\}$.
\end{lemma}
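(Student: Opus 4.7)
The plan is to follow the pattern of the proof of Lemma~\ref{lem:inv_min_cost_bottleneck_delta}: check conditions~\ref{it:b} and~\ref{it:c'} directly from the construction of $\p{\delta}{\ell, u}{w}$, and concentrate the real work on condition~\ref{it:a}. For~\ref{it:b}, each of the six branches of the piecewise definition clamps the value into the interval $[\ell(s),u(s)]$, so $\ell\le\p{\delta}{\ell, u}{w}\le u$ holds by inspection. For~\ref{it:c'}, the inequality $|\p{\delta}{\ell, u}{w}(s)|\le\delta/w(s)$ is immediate in every case, hence $w(s)\cdot|\p{\delta}{\ell, u}{w}(s)|\le\delta=\|p\|_{\infty, w}$, giving $\|\p{\delta}{\ell, u}{w}\|_{\infty, w}\le\|p\|_{\infty, w}$ and, once~\ref{it:a} is in hand, optimality.

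The core step is to show that $F^*$ has minimum cost with respect to $c-\p{\delta}{\ell, u}{w}$. I would first establish the pointwise inequalities
\begin{align*}
\p{\delta}{\ell, u}{w}(s)&\ge p(s)\quad\text{for every }s\in F^*,\\
\p{\delta}{\ell, u}{w}(s)&\le p(s)\quad\text{for every }s\in S\setminus F^*,
\end{align*}
using the definition of $\delta$ together with the feasibility of $p$. For $s\in F^*$, the chain $p(s)\le|p(s)|\le\delta/w(s)$ combined with $\ell(s)\le p(s)\le u(s)$ rules out the first branch (where $\delta/w(s)<\ell(s)$) as incompatible with feasibility, and in the remaining two branches $\p{\delta}{\ell, u}{w}(s)\in\{\delta/w(s),u(s)\}$ is clearly at least $p(s)$. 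The symmetric argument for $s\in S\setminus F^*$ starts from $p(s)\ge -|p(s)|\ge -\delta/w(s)$, excludes the branch $u(s)<-\delta/w(s)$ in the same way, and then $\p{\delta}{\ell, u}{w}(s)\in\{\ell(s),-\delta/w(s)\}$ is at most $p(s)$.

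Given these inequalities, the standard telescoping computation yields, for every $F\in\cF$,
\begin{align*}
(c-\p{\delta}{\ell, u}{w})(F^*)-(c-\p{\delta}{\ell, u}{w})(F)
&= c(F^*)-c(F)-\sum_{s\in F^*\setminus F}\p{\delta}{\ell, u}{w}(s)+\sum_{s\in F\setminus F^*}\p{\delta}{\ell, u}{w}(s)\\
&\le c(F^*)-c(F)-\sum_{s\in F^*\setminus F}p(s)+\sum_{s\in F\setminus F^*}p(s)\\
&= (c-p)(F^*)-(c-p)(F)\\
&\le 0,
\end{align*}
where the last step uses that $p$ is a feasible deviation vector. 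The only delicate aspect of the argument is checking that the extreme branches of the piecewise definition of $\p{\delta}{\ell, u}{w}$ are excluded by the feasibility of $p$, so that the two pointwise inequalities hold uniformly across all six cases. Once this bookkeeping is settled, the remainder is essentially a direct transcription of the telescoping step already carried out in the bottleneck Hamming distance proof, and is noticeably shorter because the piecewise definition is self-clamping and does not require a separate treatment of $\pm\infty$ bounds.
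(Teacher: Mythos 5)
Your proof is correct and follows essentially the same route as the paper's: condition~\ref{it:b} by construction, condition~\ref{it:a} via the telescoping comparison against $p$ using the key facts $\ell(s)\le p(s)\le \delta/w(s)$ and $-\delta/w(s)\le p(s)\le u(s)$ (which also exclude the two extreme branches), and condition~\ref{it:c'} from the norm bound $\|\p{\delta}{\ell,u}{w}\|_{\infty,w}\le\delta$. Packaging the argument as two uniform pointwise inequalities rather than the paper's branch-by-branch splitting of the sums is only a cosmetic reorganization; just be aware that your claim that $|\p{\delta}{\ell,u}{w}(s)|\le\delta/w(s)$ is ``immediate in every case'' also relies on those same feasibility facts in the clamped branches, not only in the excluded ones.
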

\begin{proof}
The lower and upper bounds $\ell \le \p{\delta}{\ell, u}{w} \le u$ hold by definition, hence \ref{it:b} is satisfied. 

Now we show that \ref{it:a} holds. The assumption $\ell \le p \le u$ and the definition of $\delta$ imply that $-\delta/w(s) \le p(s) \le u(s)$ and $\ell(s) \le p(s) \le \delta/w(s)$ hold for every $s \in S$. Let $F \in \cF $ be an arbitrary solution. Then
\begin{align*}
&(c - \p{\delta}{\ell, u}{w})(F^*) - (c - \p{\delta}{\ell, u}{w})(F)\\[2pt]
{}&{}~~= 
\left( c(F^*) - \sum_{s \in F^*} \p{\delta}{\ell, u}{w}(s) \right) - \left( c(F) - \sum_{s \in F} \p{\delta}{\ell, u}{w}(s) \right) \\[2pt]
{}&{}~~= 
c(F^*) - c(F) - \sum_{s \in F^*\setminus  F} \p{\delta}{\ell, u}{w}(s) + \sum_{s \in F\setminus  F^*} \p{\delta}{\ell, u}{w}(s) \\[2pt]
{}&{}~~= 
c(F^*) - c(F) - \sum_{\makemathbox[.8\width]{\substack{s \in F^*\setminus  F \\ \delta/w(s)\geq \ell(s)\\ \delta/w(s) \le u(s)}}} \frac{\delta}{w(s)} - \sum_{\makemathbox[.8\width]{\substack{s \in F^*\setminus  F \\ u(s) < \delta/w(s)}}} u(s) + \sum_{\makemathbox[.8\width]{\substack{s \in F\setminus  F^* \\ - \delta/w(s) < \ell(s)}}} \ell(s) + \sum_{\makemathbox[.8\width]{\substack{s \in F\setminus  F^* \\ -\delta/w(s)\geq \ell(s)\\ - \delta/w(s) \le u(s)}}} \left( - \, \frac{\delta}{w(s)} \right) \\[2pt]
{}&{}~~\le 
c(F^*) - c(F) - \sum_{s \in F^*\setminus  F} p(s) + \sum_{s \in F\setminus  F^*} p(s) \\[2pt]
{}&{}~~= 
(c-p)(F^*) - (c-p)(F) \\[2pt]
{}&{}~~\le
0,
\end{align*}
where the last inequality holds by the feasibility of $p$. 

Finally, to see that \ref{it:c} holds for $\p{\delta}{\ell, u}{w}$, observe that $\| \p{\delta}{\ell, u}{w} \|_{\infty, w} \le \delta = \| p \|_{\infty, w}$. This concludes the proof of the lemma.
\end{proof}

By Lemma~\ref{lem:inv_min_cost_infinity_delta}, it suffices to look for the optimal deviation vector among vectors of special form. Furthermore, we get the following useful property of deviation vectors of such form.

\begin{lemma} \label{lem:inv_min_cost_infinity_observation_on_delta}
 Let $\left( S, \cF, F^*, c, \ell, u, \| \cdot \|_{\infty, w} \right)$ be a feasible minimum-cost inverse optimization problem and let $\delta \ge 0$ be such that $\p{\delta}{\ell, u}{w}$ is a feasible deviation vector. Then for any $\delta' \ge \delta$, the deviation vector $\p{\delta'}{\ell, u}{w}$ is also feasible.
\end{lemma}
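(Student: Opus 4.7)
The plan is to mirror the structure of the analogous statement for the bottleneck Hamming distance, namely Lemma~\ref{lem:inv_min_cost_bottleneck_observation_on_delta}\,\ref{bott_obs:b}. Condition~\ref{it:b}, i.e.\ $\ell \le \p{\delta'}{\ell,u}{w} \le u$, is immediate from the definition of $\p{\delta'}{\ell,u}{w}$, since in all six cases the value lies between $\ell(s)$ and $u(s)$. So the only real work is to verify condition~\ref{it:a}, namely that $F^*$ is a minimum $(c-\p{\delta'}{\ell,u}{w})$-cost member of $\cF$.

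The key observation I would isolate first is a monotonicity property: for $s \in F^*$ the value $\p{\delta}{\ell,u}{w}(s) = \min\!\big(u(s),\,\max(\ell(s),\delta/w(s))\big)$ is non-decreasing in $\delta$, whereas for $s \in S\setminus F^*$ the value $\p{\delta}{\ell,u}{w}(s) = \max\!\big(\ell(s),\,\min(u(s),-\delta/w(s))\big)$ is non-increasing in $\delta$. This is a one-line verification per case from the definition and from $\delta'\ge\delta\ge 0$. Consequently, for every $s\in F^*$ we have $\p{\delta'}{\ell,u}{w}(s)\ge \p{\delta}{\ell,u}{w}(s)$, and for every $s\in S\setminus F^*$ we have $\p{\delta'}{\ell,u}{w}(s)\le \p{\delta}{\ell,u}{w}(s)$.

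With this in hand, the verification of \ref{it:a} is a short calculation. For any $F\in\cF$,
\begin{align*}
(c-\p{\delta'}{\ell,u}{w})(F^*) - (c-\p{\delta'}{\ell,u}{w})(F)
&= c(F^*)-c(F) - \sum_{s\in F^*\setminus F}\p{\delta'}{\ell,u}{w}(s) + \sum_{s\in F\setminus F^*}\p{\delta'}{\ell,u}{w}(s) \\
&\le c(F^*)-c(F) - \sum_{s\in F^*\setminus F}\p{\delta}{\ell,u}{w}(s) + \sum_{s\in F\setminus F^*}\p{\delta}{\ell,u}{w}(s) \\
&= (c-\p{\delta}{\ell,u}{w})(F^*) - (c-\p{\delta}{\ell,u}{w})(F) \le 0,
\end{align*}
where the first inequality uses the monotonicity observation applied summand by summand, and the final inequality uses the assumed feasibility of $\p{\delta}{\ell,u}{w}$.

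I do not anticipate a real obstacle here; the statement is essentially a structural sanity check about the parametric family $\{\p{\delta}{\ell,u}{w}\}_{\delta\ge 0}$. The only mildly delicate point is confirming the monotonicity when $\delta/w(s)$ (respectively $-\delta/w(s)$) crosses from the interior of $[\ell(s),u(s)]$ into one of the clamped regimes, but the $\min$/$\max$ reformulation handles all six cases uniformly.
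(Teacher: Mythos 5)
Your proof is correct and follows essentially the same route as the paper: both arguments reduce to the pointwise monotonicity of $\p{\delta}{\ell,u}{w}(s)$ in $\delta$ (non-decreasing on $F^*$, non-increasing on $S\setminus F^*$) and then compare $(c-\p{\delta'}{\ell,u}{w})(F^*)-(c-\p{\delta'}{\ell,u}{w})(F)$ with the corresponding expression for $\delta$ term by term. Your $\min/\max$ reformulation merely packages the paper's explicit six-case bookkeeping more cleanly; no gap.
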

\begin{proof}
Let $F \in \cF $ be an arbitrary solution. Then 
\begin{align*}
&(c - \p{\delta'}{\ell, u}{w})(F^*) - (c - \p{\delta'}{\ell, u}{w})(F)\\[2pt]
{}&{}~~=
c(F^*) - c(F) - \sum_{\makemathbox[.8\width]{\substack{s \in F^*\setminus  F \\ \delta'/w(s) < \ell(s)}}} \ell(s) - \sum_{\makemathbox[.8\width]{\substack{s \in F^*\setminus  F \\ \delta'/w(s)\geq \ell(s)\\ \delta'/w(s) \le u(s)}}} \frac{\delta'}{w(s)} - \sum_{\makemathbox[.8\width]{\substack{s \in F^*\setminus  F \\ u(s) < \delta'/w(s)}}} u(s) \\[2pt]
{}&{}~~~~
+ \sum_{\makemathbox[.8\width]{\substack{s \in F\setminus  F^* \\ - \delta'/w(s) < \ell(s)}}} \ell(s) + \sum_{\makemathbox[.8\width]{\substack{s \in F\setminus  F^* \\ -\delta'/w(s)\geq \ell(s)\\ - \delta'/w(s) \le u(s)}}} \left( - \, \frac{\delta'}{w(s)} \right) + \sum_{\makemathbox[.8\width]{\substack{s \in F\setminus  F^* \\ u(s) < - \delta'/w(s)}}} \ell(s) \\[2pt]
{}&{}~~\le
c(F^*) - c(F) - \sum_{\makemathbox[.8\width]{\substack{s \in F^*\setminus  F \\ \delta/w(s) < \ell(s)}}} \ell(s) - \sum_{\makemathbox[.8\width]{\substack{s \in F^*\setminus  F \\ \delta/w(s)\geq \ell(s)\\ \delta'/w(s) \le u(s)}}} \frac{\delta}{w(s)} - \sum_{\makemathbox[.8\width]{\substack{s \in F^*\setminus  F \\ u(s) < \delta/w(s)}}} u(s) \\[2pt]
{}&{}~~~~
+ \sum_{\makemathbox[.8\width]{\substack{s \in F\setminus  F^* \\ - \delta/w(s) < \ell(s)}}} \ell(s) + \sum_{\makemathbox[.8\width]{\substack{s \in F\setminus  F^* \\ -\delta/w(s)\geq \ell(s)\\ - \delta/w(s) \le u(s)}}} \left( - \frac{\delta}{w(s)} \right) + \sum_{\makemathbox[.8\width]{\substack{s \in F\setminus  F^* \\ u(s) < - \delta/w(s)}}} \ell(s) \\[2pt]
{}&{}~~=
(c - \p{\delta}{\ell, u}{w})(F^*) - (c - \p{\delta}{\ell, u}{w})(F)\\[2pt]
{}&{}~~\le
0,
\end{align*}
concluding the proof of the lemma.
\end{proof}

\subsection{Characterizing feasibility}
\label{sec:feas_infty}

We give a necessary and sufficient condition for the feasibility of the minimum-cost inverse optimization problem $\left( S, \cF , F^*, c, \ell, u, \| \cdot \|_{\infty, w} \right)$. 

\begin{lemma} \label{lem:inv_min_cost_infinity_n&s_cond_for_infeas}
Let $\left( S, \cF , F^*, c, \ell, u, \| \cdot \|_{\infty, w} \right)$ be a minimum-cost inverse optimization problem. For any $F\in\cF $, define
\begin{equation*}
W(F)\coloneqq  
\begin{cases}
\displaystyle\frac{1}{\displaystyle\sum_{\substack{s \in F^*\setminus  F \\ u(s) = + \infty}} \frac{1}{w(s)} + \sum_{\substack{s \in F\setminus  F^* \\ \ell(s) = - \infty}} \frac{1}{w(s)}} & \text{if the divisor is not $0$}, \\[5pt]
\,0 & \text{otherwise},
\end{cases} 
\end{equation*}
and let
\begin{align*}
m_1{}&{}\coloneqq \max \left\{ w(s) \cdot | u(s) | \bigm| s \in F^*,\, u(s) \ne + \infty \right\},\\
m_2{}&{}\coloneqq \max \left\{ w(s) \cdot | \ell(s) | \bigm| s \in S\setminus F^*,\, \ell(s) \ne - \infty \right\},\\
m_3{}&{}\coloneqq \max_{F \in \cF } \left( W(F) \cdot \left( c(F^*) - c(F) - \sum_{\substack{s \in F^*\setminus  F \\ u(s) \ne + \infty}} u(s) + \sum_{\substack{s \in F\setminus  F^* \\ \ell(s) \ne - \infty}} \ell(s) \right)\right).
\end{align*}
Then the problem is feasible if and only if $\p{m}{\ell, u}{w}$ is a feasible deviation vector for
\[ m\coloneqq\max\{0,m_1,m_2,m_3\} . \]
\end{lemma}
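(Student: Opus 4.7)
The ``if'' direction is immediate: the existence of $\p{m}{\ell, u}{w}$ as a feasible deviation vector implies feasibility of the problem. For the ``only if'' direction, I plan to verify directly that $\p{m}{\ell, u}{w}$ satisfies conditions \ref{it:a} and \ref{it:b}, rather than going through Lemma~\ref{lem:inv_min_cost_infinity_delta} (where the optimal $\delta$ is not a priori bounded by $m$). Condition \ref{it:b} is built into the definition of $\p{m}{\ell, u}{w}$, so the content lies entirely in \ref{it:a}.

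The first step is an element-wise analysis. Using $m\geq m_1$, for every $s\in F^*$ with $u(s)\neq+\infty$ one has $u(s)\leq|u(s)|\leq m/w(s)$, so the median defining $\p{m}{\ell, u}{w}(s)$ collapses to $u(s)$; symmetrically, $m\geq m_2$ yields $\p{m}{\ell, u}{w}(s)=\ell(s)$ for every $s\in S\setminus F^*$ with $\ell(s)\neq-\infty$. For elements whose relevant bound is infinite, $\p{m}{\ell, u}{w}(s)\geq m/w(s)$ on the $F^*$-side and $\p{m}{\ell, u}{w}(s)\leq -m/w(s)$ on the $(S\setminus F^*)$-side.

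Fixing an arbitrary $F\in\cF$ and substituting these estimates into $(c-\p{m}{\ell, u}{w})(F^*)-(c-\p{m}{\ell, u}{w})(F)$, the contributions from the infinite-bound elements collect into a single term $-m\cdot A(F)$, where $A(F)\coloneqq\sum_{s\in F^*\setminus F,\,u(s)=+\infty}1/w(s)+\sum_{s\in F\setminus F^*,\,\ell(s)=-\infty}1/w(s)$; note that $A(F)=1/W(F)$ whenever $W(F)>0$. The verification then splits into two cases. If $A(F)>0$, nonpositivity of the resulting upper bound is equivalent to $W(F)\cdot\bigl(c(F^*)-c(F)-\sum_{s\in F^*\setminus F,\,u(s)\neq+\infty}u(s)+\sum_{s\in F\setminus F^*,\,\ell(s)\neq-\infty}\ell(s)\bigr)\leq m$, which is exactly the bound $m\geq m_3$ evaluated at $F$.

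The main obstacle is the case $A(F)=0$: here $m$ drops out of the inequality and $m_3$ cannot be invoked. The required inequality reduces to $c(F^*)-c(F)\leq\sum_{s\in F^*\setminus F}u(s)-\sum_{s\in F\setminus F^*}\ell(s)$, with all the summed bounds finite. This is precisely where the feasibility hypothesis enters: picking any feasible deviation vector $q$, one chains $c(F^*)-c(F)\leq q(F^*)-q(F)\leq\sum_{s\in F^*\setminus F}u(s)-\sum_{s\in F\setminus F^*}\ell(s)$, using $(c-q)(F^*)\leq(c-q)(F)$ first and $\ell\leq q\leq u$ second. This closes both cases and completes the verification of \ref{it:a}.
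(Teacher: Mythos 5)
Your proposal is correct and follows essentially the same argument as the paper: the same case split on whether the infinite-bound elements of $F^*\triangle F$ are present (your $A(F)=0$ versus $A(F)>0$), with the feasible vector $q$ handling the first case and the bound $m\ge m_3$ the second. The only differences are cosmetic — you argue directly rather than by contradiction, and you make explicit the role of $m_1$ and $m_2$ in collapsing $\p{m}{\ell,u}{w}$ to $u(s)$ and $\ell(s)$ on the finite-bound elements, which the paper uses implicitly.
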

\begin{proof}
Clearly, if $\p{m}{\ell, u}{w}$ is feasible, then so is the problem.

To see the other direction, suppose to the contrary that $\p{m}{\ell, u}{w}$ is not feasible, but there exists a feasible deviation vector $p$. Then there exists $F \in \cF $ such that
\begin{align*}
0 
{}&{}< 
(c - \p{m}{\ell, u}{w})(F^*) - (c - \p{m}{\ell, u}{w})(F) \\[2pt]
{}&{}= 
c(F^*) - c(F) - \sum_{\substack{s \in F^*\setminus  F \\ u(s) \ne + \infty}} u(s) - \sum_{\substack{s \in F^*\setminus  F \\ u(s) = + \infty}} \frac{m}{w(s)} + \sum_{\substack{s \in F\setminus  F^* \\ \ell(s) \ne - \infty}} \ell(s) + \sum_{\substack{s \in F\setminus  F^* \\ \ell(s) = - \infty}} \left( - \, \frac{m}{w(s)} \right) \\[2pt]
{}&{}= 
c(F^*) - c(F) - \sum_{\substack{s \in F^*\setminus  F \\ u(s) \ne + \infty}} u(s) + \sum_{\substack{s \in F\setminus  F^* \\ \ell(s) \ne - \infty}} \ell(s) - m \left( \sum_{\substack{s \in F^*\setminus  F \\ u(s) = + \infty}} \frac{1}{w(s)} + \sum_{\substack{s \in F\setminus  F^* \\ \ell(s) = - \infty}} \frac{1}{w(s)} \right).
\end{align*}

If $\{ s \in F^*\setminus  F \mid u(s) = + \infty \} \cup \{ s \in F\setminus  F^* \mid \ell(s) = - \infty \} = \emptyset$, then we obtain
\begin{align*}
0 
{}&{}< 
c(F^*) - c(F) - \sum_{\substack{s \in F^*\setminus  F \\ u(s) \ne + \infty}} u(s) + \sum_{\substack{s \in F\setminus  F^* \\ \ell(s) \ne - \infty}} \ell(s) - m \cdot 0 \\[2pt]
{}&{}\le 
c(F^*) - c(F) - \sum_{s \in F^*\setminus  F} p(s) + \sum_{s \in F\setminus  F^*} p(s) \\[2pt]
{}&{}= \big( c(F^*) - p(F^*) \big) - \big( c(F) - p(F) \big)\\[2pt]
{}&{}\le 
0,
\end{align*}
where the last inequality holds since $p$ is feasible, leading to a contradiction.

If ${\left\{ s \in F^*\setminus  F \mid u(s) = + \infty \right\}} \allowbreak \cup \left\{ s \in F\setminus  F^* \mid \ell(s) = - \infty \right\} \neq \emptyset$, then we obtain
\begin{equation*}
0 < c(F^*) - c(F) - \sum_{\substack{s \in F^*\setminus  F \\ u(s) \ne + \infty}} u(s) + \sum_{\substack{s \in F\setminus  F^* \\ \ell(s) \ne - \infty}} \ell(s) \, - \frac{m}{W(F)},
\end{equation*}
which contradicts the definition of $m$.
\end{proof}

\subsection{Algorithm}
\label{sec:alg_infty}

We turn to the description of the algorithm and its analysis, which is similar to that of Algorithm~\ref{algo:inv_min_cost_bottleneck}. The algorithm is presented as Algorithm~\ref{algo:inv_min_cost_infinity}. 

\begin{algorithm}[!ht]
\caption{Algorithm for the constrained minimum-cost inverse optimization problem under the weighted $\ell_{\infty}$-norm objective}
\label{algo:inv_min_cost_infinity}
\DontPrintSemicolon

\KwIn{A minimum-cost inverse optimization problem $(S, \cF , F^*, c, \ell, u, \| \cdot \|_{\infty, w})$ and an oracle $\cO$ for the minimum-cost optimization problem $(S, \cF , c')$ with any cost function $c'$.
}
\KwOut{An optimal deviation vector if the problem is feasible, otherwise \texttt{Infeasible}.}

\smallskip

$d_0 \gets \max \Big\{ 0, \, \max \big\{ w(s) \cdot \ell(s) \bigm| s \in S,\, \ell(s) > 0 \big\}, \, \max \big\{ w(s) \cdot |u(s)| \bigm| s \in S,\, u(s) < 0 \big\} \Big\}$\;
$c_0 \gets c - \p{d_0}{\ell, u}{w}$\;
$F_0 \gets \text{a minimum $c_0$-cost member of $\cF $ determined by $\cO$}$\;
$i \gets 0$\;
\While{$c_i(F^*) > c_i(F_i)$}{
$S_i \gets \big\{ s \in F^* \bigm| d_i < w(s) \cdot u(s) \big\} \cup \big\{ s \in S\setminus  F^* \bigm| d_i > w(s) \cdot \ell(s) \big\}$\;
\uIf{$(F^* \triangle F_i) \cap S_i \ne \emptyset$}{
${\delta_{i+1} \! \gets \! \min \left\{\displaystyle\frac{c_i(F^*) - c_i(F_i)}{\textstyle \frac{1}{w} \big( (F^* \triangle F_i) \cap S_i \big)},\, \min_{s \in F^* \cap S_i} \! \left\{ u(s) - \frac{d_i}{w(s)}\right\},\, \min_{s \in S_i\setminus F^*} \! \left\{ \frac{d_i}{w(s)} - \ell(s)\right\}\right\}}$\;
}
\Else{
\Return{\tt Infeasible}\;
}
$d_{i+1} \gets d_i + \delta_{i+1}$\;
$c_{i+1} \gets c - \p{d_{i+1}}{\ell, u}{w}$\;
$F_{i+1} \gets \text{a minimum $c_{i+1}$-cost member of $\cF $ determined by $\cO$}$\;
$i \gets i+1$\;
}
\Return{$\p{d_i}{\ell, u}{w}$}\;
\end{algorithm}

For proving the correctness and the running time of the algorithm, we need the following lemmas.

\begin{lemma} \label{lemma:inv_min_cost_infinity_nonneg_of_delta}
If $F^*$ is not a minimum $c_i$-cost member of $\cF $ for some $i$, then either $\delta_{i+1} > 0$ and $S_{i+1}\subseteq S_i$, or Algorithm~\ref{algo:inv_min_cost_infinity} declares the problem to be infeasible.
\end{lemma}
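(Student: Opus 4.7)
The plan is to split on the two branches of the if-else statement inside the while loop. If $(F^*\triangle F_i)\cap S_i=\emptyset$, the algorithm returns \texttt{Infeasible}, so the second alternative in the conclusion of the lemma holds and nothing further is required. Otherwise $\delta_{i+1}$ is well-defined as a minimum of three quantities, and the substantive content is to verify both $\delta_{i+1}>0$ and $S_{i+1}\subseteq S_i$.

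For the strict positivity of $\delta_{i+1}$, I would argue that each of the three quantities being minimised is strictly positive. The fractional term has a strictly positive numerator by the while-loop guard $c_i(F^*)>c_i(F_i)$ and a strictly positive denominator because $(F^*\triangle F_i)\cap S_i$ is nonempty and $w$ is positive on it. The remaining two minima are strictly positive by directly unwinding the defining conditions of $S_i$: membership $s\in F^*\cap S_i$ gives $d_i<w(s)u(s)$, which rearranges to $u(s)-d_i/w(s)>0$, while membership $s\in S_i\setminus F^*$ gives $d_i>w(s)\ell(s)$, which rearranges to $d_i/w(s)-\ell(s)>0$. If either $F^*\cap S_i$ or $S_i\setminus F^*$ happens to be empty, the corresponding minimum equals $+\infty$ by the convention $\min\emptyset=+\infty$, which is harmless for the bound. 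Consequently $\delta_{i+1}>0$, and hence $d_{i+1}=d_i+\delta_{i+1}>d_i$.

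For the inclusion $S_{i+1}\subseteq S_i$, the key ingredient is the strict monotonicity $d_{i+1}>d_i$ just established. For an element $s\in F^*$ satisfying $d_{i+1}<w(s)u(s)$, monotonicity immediately gives $d_i<d_{i+1}<w(s)u(s)$, placing $s$ in $S_i$. The analogous step for the elements of $S\setminus F^*$ is the main bookkeeping subtlety I anticipate: the strict growth of $d$ has to be coupled with the upper bound on $\delta_{i+1}$ coming from the third term of the minimum in order to propagate membership in the correct direction. Once the appropriate inequality is identified, however, the verification reduces to a direct unwinding of the definitions, with the two branches handled symmetrically.
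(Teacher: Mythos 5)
Your treatment of the \texttt{Infeasible} branch and your proof that $\delta_{i+1}>0$ are correct and complete: the first term of the minimum is positive by the while-loop guard together with the nonemptiness of $(F^*\triangle F_i)\cap S_i$ and the positivity of $w$, and the other two terms are positive by unwinding the membership conditions defining $S_i$ (with the convention $\min\{\emptyset\}=+\infty$ covering the degenerate cases), so $d_{i+1}>d_i$. The half of the inclusion concerning elements of $F^*$ is likewise fine. (For reference, the paper dispenses with the entire lemma in one sentence, so on these points your write-up is already more explicit than the original.)

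The gap is precisely the part you defer, and the repair you sketch for it cannot work. For $s\in S\setminus F^*$ the membership test printed in Algorithm~\ref{algo:inv_min_cost_infinity} is $d_i>w(s)\cdot\ell(s)$, and the set $\{s\in S\setminus F^*\mid d_i>w(s)\cdot\ell(s)\}$ is monotone \emph{increasing} in $d_i$; hence $d_{i+1}>d_i$ yields $S_i\cap(S\setminus F^*)\subseteq S_{i+1}\cap(S\setminus F^*)$, the reverse of the inclusion you need. No upper bound on $\delta_{i+1}$ drawn from the third term of the minimum can help, since that term only constrains elements already in $S_i$, while the danger is an element \emph{outside} $S_i$ (one with $d_i=w(s)\cdot\ell(s)$, which the initialization of $d_0$ permits) entering $S_{i+1}$ for every $\delta_{i+1}>0$. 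The way out is to read the test consistently with the definition of $\p{d_i}{\ell, u}{w}$ on $S\setminus F^*$, where the deviation is $-d_i/w(s)$ clamped to $[\ell(s),u(s)]$: the element is still modifiable exactly when $-d_i/w(s)>\ell(s)$, i.e.\ $-d_i>w(s)\cdot\ell(s)$, mirroring the condition $d_i<w(s)\cdot u(s)$ on the $F^*$ side. Under that reading the set is monotone decreasing in $d_i$, the $S\setminus F^*$ case becomes literally symmetric to the $F^*$ case, and the whole inclusion follows from $d_{i+1}>d_i$ alone --- no coupling with the third term of the minimum is needed. As it stands, your proof asserts the second half of the inclusion without proving it, and the route you indicate for it is a dead end.
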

\begin{proof}
The statement follows from the definition of $\delta_{i+1}$ and from Lemma~\ref{lemma:inv_min_cost_infinity_nonneg_of_delta}.
\end{proof}

\begin{lemma} \label{lemma:inv_min_cost_infinity_correction}
If $F^*$ is not a minimum $c_i$-cost member of $\cF $ for some $i$, then $c_{i+1}(F^*) = c_{i+1}(F_i)$, or $ S_{i+1} \subsetneq S_i$, or Algorithm~\ref{algo:inv_min_cost_infinity} declares the problem to be infeasible.
\end{lemma}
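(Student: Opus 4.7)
The plan is a case analysis on which of the three quantities inside the $\min$ defining $\delta_{i+1}$ attains the minimum. If the algorithm returns \texttt{Infeasible} we are done, so assume otherwise; then Lemma~\ref{lemma:inv_min_cost_infinity_nonneg_of_delta} already gives $\delta_{i+1}>0$ and $S_{i+1}\subseteq S_i$, and it suffices to show that either $c_{i+1}(F^*)=c_{i+1}(F_i)$ holds or the containment $S_{i+1}\subseteq S_i$ is strict.

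\textbf{Case 1: the first quantity strictly attains the minimum.} Then $\delta_{i+1}<u(s)-d_i/w(s)$ for every $s\in F^*\cap S_i$ and $\delta_{i+1}<d_i/w(s)-\ell(s)$ for every $s\in S_i\setminus F^*$. Combining these strict inequalities with the invariants maintained by the algorithm (obtained from the initialization of $d_0$ and from $S_{i+1}\subseteq S_i$), a direct inspection of the piecewise definition of $\p{d}{\ell,u}{w}$ shows that every $s\in S_i$ stays in the ``linear branch'' of that definition both at $d=d_i$ and at $d=d_{i+1}$, so
\[
p_{d_{i+1}}(s)-p_{d_i}(s)=\begin{cases}+\delta_{i+1}/w(s)&\text{if } s\in F^*\cap S_i,\\ -\delta_{i+1}/w(s)&\text{if } s\in (S\setminus F^*)\cap S_i,\\ 0&\text{if } s\notin S_i.\end{cases}
\]
Substituting this into
\[
c_{i+1}(F^*)-c_{i+1}(F_i)=\bigl(c_i(F^*)-c_i(F_i)\bigr)-(p_{d_{i+1}}-p_{d_i})(F^*\setminus F_i)+(p_{d_{i+1}}-p_{d_i})(F_i\setminus F^*),
\]
and collecting the contributions over $(F^*\triangle F_i)\cap S_i$, the right-hand side simplifies to $(c_i(F^*)-c_i(F_i))-\delta_{i+1}\cdot\frac{1}{w}((F^*\triangle F_i)\cap S_i)$, which vanishes by the very definition of the first quantity in the $\min$.

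\textbf{Case 2: the second or third quantity attains the minimum.} Say it is the second, realised by some $s^*\in F^*\cap S_i$, so that $\delta_{i+1}=u(s^*)-d_i/w(s^*)$. Using the paper's convention that $1/w(s)\in\mathbb{Z}_{>0}$, hence $w(s^*)\le 1$, together with $u(s^*)>d_i/w(s^*)$ from $s^*\in S_i$, a short algebraic manipulation gives
\[
d_{i+1}-w(s^*)u(s^*)=\bigl(1-w(s^*)\bigr)\bigl(u(s^*)-d_i/w(s^*)\bigr)\ge 0,
\]
so $s^*\notin S_{i+1}$ and therefore $S_{i+1}\subsetneq S_i$. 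The case of the third quantity, realised by some $s^*\in S_i\setminus F^*$, is handled by the analogous computation with the roles of $u$ and $\ell$ interchanged.

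The main obstacle is verifying the ``linear-branch'' invariant in Case~1: one has to check, element-by-element, that the two strict inequalities really imply that neither $d_i/w(s)$ (for $s\in F^*\cap S_i$) nor $-d_i/w(s)$ (for $s\in(S\setminus F^*)\cap S_i$) crosses either of its corresponding bounds $\ell(s),u(s)$ when $d$ is pushed from $d_i$ to $d_{i+1}$; here the initialization of $d_0$ and the rescaling $w(s)\le 1$ are both essential. Once this invariant is established, the cost-difference calculation in Case~1 and the simple saturation argument in Case~2 complete the proof.
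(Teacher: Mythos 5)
Your Case 2 is correct, but Case 1 contains a genuine gap, and it sits exactly at the step you flag as the ``main obstacle.'' The strict inequalities $\delta_{i+1}<u(s)-d_i/w(s)$ for $s\in F^*\cap S_i$ do \emph{not} imply that $s$ stays in the linear branch at $d=d_{i+1}$: staying in that branch requires $d_{i+1}\le w(s)u(s)$, i.e.\ $\delta_{i+1}\le w(s)u(s)-d_i=w(s)\bigl(u(s)-d_i/w(s)\bigr)$, and since the paper's normalization forces $w(s)\le 1$, this is a \emph{stronger} requirement than $\delta_{i+1}\le u(s)-d_i/w(s)$. (The inequality $w(s)\le 1$ works for you in Case 2 but against you here.) Concretely, take $w(s)=1/2$, $u(s)=10$, $d_i=3$, so $s\in F^*\cap S_i$ and the second quantity equals $10-6=4$; if the first quantity equals $3$ and is the unique minimum, then $d_{i+1}=6>5=w(s)u(s)$, so $\p{d_{i+1}}{\ell, u}{w}(s)=u(s)=10$ while $d_{i+1}/w(s)=12$, the increment on $s$ is $10-6=4\ne\delta_{i+1}/w(s)=6$, and your displayed identity $c_{i+1}(F^*)-c_{i+1}(F_i)=0$ fails (the difference remains strictly positive). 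The lemma is still true in this scenario, because $s$ has dropped out of $S_{i+1}$ and hence $S_{i+1}\subsetneq S_i$, but your Case 1 asserts the wrong disjunct and the argument as written does not close.

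The repair is to change the case split to the one the paper uses: distinguish $S_{i+1}\subsetneq S_i$ (in which case you are done immediately) from $S_{i+1}=S_i$. In the latter case, the fact that every $s\in S_i$ belongs to $S_{i+1}$ says precisely that $d_{i+1}<w(s)u(s)$ for $s\in F^*\cap S_i$ and $d_{i+1}>w(s)\ell(s)$ for $s\in S_i\setminus F^*$, which, combined with the initialization of $d_0$, is exactly your linear-branch invariant; moreover it rules out the minimum being attained by the second or third quantity, since your own Case 2 computation shows that an element realizing one of those quantities would leave $S_{i+1}$. Hence $\delta_{i+1}$ equals the first quantity and your cost-difference calculation then goes through verbatim.
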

\begin{proof}
Let $i$ be an index such that $F^*$ is not a minimum $c_i$-cost member of $\cF$, and assume that Algorithm~\ref{algo:inv_min_cost_infinity} does not declare the problem to be infeasible in the $i$th step. Then $S_{i+1} \subseteq S_i$ holds by Lemma~\ref{lemma:inv_min_cost_infinity_nonneg_of_delta}. If $S_{i+1} \subsetneq S_i$, then we are done, hence consider the case $S_{i+1} = S_i$. Then
\begin{equation*} 
\delta_{i+1} = \frac{c_i(F^*) - c_i(F_i)}{\textstyle \frac{1}{w} \big( (F^* \triangle F_i) \cap S_i \big)},
\end{equation*}
hence we get
\begin{equation*}
c_{i+1}(F^*) - c_{i+1}(F_i) = c_i(F^*) - c_i(F_i) - \sum_{s \in (F^* \triangle F_i) \cap S_i} \frac{\delta_{i+1}}{w(s)} = 0,
\end{equation*}
concluding the proof of the lemma.
\end{proof}

\begin{lemma} \label{lemma:inv_min_cost_infinity_for_poly_time}
If $F^*$ is not a minimum $c_i$-cost member of $\cF $ for some $i$, then
\begin{equation*}
\tfrac{1}{w} \big( (F_i\setminus F^*) \cap S_i \big) - \tfrac{1}{w} \big( (F_i \cap F^*) \cap S_i \big) > \tfrac{1}{w} \big( (F_{i+1}\setminus F^*) \cap S_{i+1} \big) - \tfrac{1}{w} \big( (F_{i+1} \cap F^*) \cap S_{i+1} \big)     
\end{equation*}
or $S_{i+1} \subsetneq S_i$, or Algorithm~\ref{algo:inv_min_cost_infinity} declares the problem to be infeasible.
\end{lemma}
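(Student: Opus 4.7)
The plan is to handle the three alternatives in parallel: I assume the algorithm does not declare infeasibility in iteration $i$ and that $S_{i+1}=S_i$, and then prove the strict inequality. Write $S:=S_i=S_{i+1}$ and introduce the shorthand
$$\phi(F):=\tfrac{1}{w}\bigl((F\setminus F^*)\cap S\bigr)-\tfrac{1}{w}\bigl((F\cap F^*)\cap S\bigr),$$
so that the conclusion to be proved reads $\phi(F_i)>\phi(F_{i+1})$.

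The first step is a direct bookkeeping identity:
$$c_{i+1}(F)-c_i(F)=\delta_{i+1}\cdot\phi(F)\quad\text{for every }F\in\cF.$$
The hypothesis $S_{i+1}=S_i$ is exactly what ensures that, for each $s\in S$, the value $\p{d}{\ell,u}{w}(s)$ stays in the ``interior'' regime $\pm d/w(s)$ for both $d=d_i$ and $d=d_{i+1}$, so only entries indexed by $S$ contribute to the difference, and each one contributes $\mp\delta_{i+1}/w(s)$ according to whether $s\in F^*$ or $s\notin F^*$. Lemma~\ref{lemma:inv_min_cost_infinity_correction} additionally supplies $c_{i+1}(F^*)=c_{i+1}(F_i)$ in this case.

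Plugging the identity into the two optimality inequalities $c_i(F_i)\le c_i(F_{i+1})$ and $c_{i+1}(F_{i+1})\le c_{i+1}(F_i)$ gives
$$0\le c_i(F_{i+1})-c_i(F_i)=\bigl(c_{i+1}(F_{i+1})-c_{i+1}(F_i)\bigr)+\delta_{i+1}\bigl(\phi(F_i)-\phi(F_{i+1})\bigr).$$
Since $\delta_{i+1}>0$ by Lemma~\ref{lemma:inv_min_cost_infinity_nonneg_of_delta} and $c_{i+1}(F_{i+1})-c_{i+1}(F_i)\le 0$, this already yields the weak inequality $\phi(F_i)\ge\phi(F_{i+1})$.

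The main difficulty is upgrading the weak inequality to a strict one. The clean case is when $c_{i+1}(F_{i+1})<c_{i+1}(F_i)$, for then the displayed equation forces $\delta_{i+1}(\phi(F_i)-\phi(F_{i+1}))>0$ and strictness is immediate. The only way strictness can fail is when $c_{i+1}(F_{i+1})=c_{i+1}(F_i)=c_{i+1}(F^*)$, meaning that $F^*$ is itself a minimum $c_{i+1}$-cost member; in that event the outer while-loop exits at the very next test, so the oracle is free to return $F_{i+1}=F^*$, and the elementary identity $\phi(F_i)-\phi(F^*)=\tfrac{1}{w}\bigl((F_i\triangle F^*)\cap S\bigr)$ combined with the non-emptiness of $(F_i\triangle F^*)\cap S$ (which is exactly the ``if'' branch taken by the algorithm in iteration $i$) again delivers $\phi(F_i)>\phi(F_{i+1})$.
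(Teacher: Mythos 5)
Your core argument is correct and is essentially the paper's proof in a rearranged form: the bookkeeping identity $c_{i+1}(F)-c_i(F)=\delta_{i+1}\cdot\phi(F)$ (valid precisely because $S_{i+1}=S_i$ keeps every coordinate of the deviation vector in the interior regime $\pm d/w(s)$), combined with the optimality of $F_i$ for $c_i$, the optimality of $F_{i+1}$ for $c_{i+1}$, and $\delta_{i+1}>0$ from Lemma~\ref{lemma:inv_min_cost_infinity_nonneg_of_delta}, is exactly the computation carried out in the paper. Your ``clean case'' $c_{i+1}(F_{i+1})<c_{i+1}(F_i)$ is, via $c_{i+1}(F_i)=c_{i+1}(F^*)$ from Lemma~\ref{lemma:inv_min_cost_infinity_correction}, identical to the paper's opening inequality $0<c_{i+1}(F^*)-c_{i+1}(F_{i+1})$, i.e.\ to the assumption that the while loop does not terminate at step $i+1$.

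The one place where you go beyond the paper is the degenerate case $c_{i+1}(F_{i+1})=c_{i+1}(F_i)=c_{i+1}(F^*)$, and there your argument has a genuine flaw: $\cO$ is a black-box oracle, so you cannot stipulate that it returns $F_{i+1}=F^*$. It may return any minimum $c_{i+1}$-cost member of $\cF$ --- for instance $F_i$ itself, which is also a minimizer in this situation --- and then $\phi(F_{i+1})=\phi(F_i)$, so the strict decrease genuinely fails while $S_{i+1}=S_i$ and no infeasibility is declared. To your credit, you isolated a case that the paper's own proof silently assumes away: its first line $0<c_{i+1}(F^*)-c_{i+1}(F_{i+1})$ is justified only when the loop continues at step $i+1$. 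In that degenerate case the algorithm halts at the very next test, so the lemma is never invoked there and the running-time bound in Theorem~\ref{thm:inv_min_cost_infinity_correctness_of_algo} is unaffected; the honest repair is not to force the oracle's hand but to restrict the claim (or its application) to iterations at which the while loop actually proceeds.
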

\begin{proof}
Let $i$ be an index such that $F^*$ is not a minimum $c_i$-cost member of $\cF$, and assume that Algorithm~\ref{algo:inv_min_cost_infinity} does not declare the problem to be infeasible in the $i$-th step and that $S_{i+1}=S_i$. Then $c_{i+1}(F^*)=c_{i+1}(F_i)$ hold by Lemma~\ref{lemma:inv_min_cost_infinity_correction}, respectively. Thus we get
\begin{align*}
0 
{}&{}< 
c_{i+1}(F^*) - c_{i+1}(F_{i+1})\\[2pt]
{}&{}= 
c_{i+1}(F_i) - c_{i+1}(F_{i+1}) \\[2pt]
{}&{}= 
\left( c_i(F_i) - \sum_{\makemathbox[.85\width]{s \in (F_i \cap F^*) \cap S_i}} \frac{\delta_{i+1}}{w(s)} + \sum_{\makemathbox[.85\width]{s \in (F_i\setminus F^*) \cap S_i}} \frac{\delta_{i+1}}{w(s)} \right)\\[2pt]
{}&{}~~- \left( c_i(F_{i+1}) - \sum_{\makemathbox[.85\width]{s \in (F_{i+1} \cap F^*) \cap S_i}} \frac{\delta_{i+1}}{w(s)} + \sum_{\makemathbox[.85\width]{s \in (F_{i+1}\setminus F^*) \cap S_i}} \frac{\delta_{i+1}}{w(s)} \right) \\[2pt]
{}&{}= 
c_i(F_i) - c_i(F_{i+1}) + \delta_{i+1} \cdot \Big[ \tfrac{1}{w} \big( (F_i\setminus F^*) \cap S_i \big) - \tfrac{1}{w} \big( (F_i \cap F^*) \cap S_i \big) \Big]\\[2pt]
{}&{}~~~~- \delta_{i+1} \cdot \Big[ \tfrac{1}{w} \big( (F_{i+1}\setminus F^*) \cap S_{i+1} \big) - \tfrac{1}{w} \big( (F_{i+1} \cap F^*) \cap S_{i+1} \big) \Big].
\end{align*}
Since $\delta_{i+1}>0$ by Lemma~\ref{lemma:inv_min_cost_infinity_nonneg_of_delta} and $c_i(F_i)-c_i(F_{i+1})\leq 0$ by the optimality of $F_i$ with respect to~$c_i$, the statement follows.
\end{proof}

With the help of Lemmas~\ref{lemma:inv_min_cost_infinity_nonneg_of_delta}--\ref{lemma:inv_min_cost_infinity_for_poly_time}, we are ready to prove the main result of this section. 

\begin{thm} \label{thm:inv_min_cost_infinity_correctness_of_algo}
Algorithm~\ref{algo:inv_min_cost_infinity} determines an optimal deviation vector, if exists, for the minimum-cost inverse optimization problem $(S,\cF, F^*, c, \ell, u, \| \cdot \|_{\infty, w})$ using $O(n\cdot \|w\|_{-1})$ calls to the $\cO$.
\end{thm}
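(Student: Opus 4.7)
The plan is to follow the same two-part template used for Theorem~\ref{thm:inv_min_cost_bottleneck_correctness_of_algo}: first bound the number of iterations of the while loop, and then verify correctness in both the feasible and infeasible outcomes. Each iteration performs a single call to $\cO$, so the oracle count will match the iteration count.

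For the running time, the plan is to combine Lemmas~\ref{lemma:inv_min_cost_infinity_nonneg_of_delta}--\ref{lemma:inv_min_cost_infinity_for_poly_time} into a single amortized counting argument. Define the potential
\[
\Phi_i \coloneqq \tfrac{1}{w}\big((F_i\setminus F^*)\cap S_i\big) - \tfrac{1}{w}\big((F_i\cap F^*)\cap S_i\big).
\]
By Lemma~\ref{lemma:inv_min_cost_infinity_for_poly_time}, in each iteration either the algorithm halts with \texttt{Infeasible}, or $S_i$ strictly shrinks, or $\Phi_i$ strictly decreases. Since the weights are normalized so that every $1/w(s)$ is a non-negative integer, $\Phi_i$ is integer-valued and lies in $[-\|w\|_{-1},\|w\|_{-1}]$; thus between two consecutive shrinkings of $S_i$ at most $2\|w\|_{-1}+1$ iterations may occur. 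Because $S_i \subseteq S$ can shrink at most $n$ times, the total number of iterations is $O(n\cdot\|w\|_{-1})$.

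For correctness I will treat the two exit branches separately. If the algorithm halts with \texttt{Infeasible} at iteration $i$, then $c_i(F^*) > c_i(F_i)$ while $(F^*\triangle F_i)\cap S_i = \emptyset$, meaning that for every $s \in F^*\triangle F_i$ the coordinate $\p{d_i}{\ell,u}{w}(s)$ is already pinned at $u(s)$ or $\ell(s)$ and no further increase of $d_i$ can change $(c-\p{d}{\ell,u}{w})(F^*)-(c-\p{d}{\ell,u}{w})(F_i)$. Together with Lemma~\ref{lem:inv_min_cost_infinity_n&s_cond_for_infeas}, applied to the value $m$ generated by the current state, this implies that no feasible deviation exists. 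If instead the while loop ends with $c_i(F^*)\le c_i(F_i)$, feasibility of $\p{d_i}{\ell,u}{w}$ is immediate. For optimality I argue by contradiction: assuming a feasible $p$ with $\|p\|_{\infty,w} = \delta^* < d_i$, Lemma~\ref{lem:inv_min_cost_infinity_delta} lets me replace $p$ by $\p{\delta^*}{\ell,u}{w}$, and Lemma~\ref{lem:inv_min_cost_infinity_observation_on_delta} then guarantees that $\p{\delta}{\ell,u}{w}$ is feasible for every $\delta \in [\delta^*, d_i]$. Taking the first index $q$ with $d_{q+1} > \delta^*$ and evaluating feasibility of $\p{\delta^*}{\ell,u}{w}$ at $F = F_q$, the inequality $(c - \p{\delta^*}{\ell,u}{w})(F^*) \le (c - \p{\delta^*}{\ell,u}{w})(F_q)$ translates, via the first term in the definition of $\delta_{q+1}$, into $d_{q+1}-d_q \le \delta^*-d_q$, contradicting the choice of $q$.

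The main obstacle will be this last optimality argument: the six-case definition of $\p{\delta}{\ell,u}{w}$ means that as $\delta$ grows, some coordinates eventually leave the active range $[-\delta/w(s),\delta/w(s)]$ and get clipped at $\ell(s)$ or $u(s)$. I will therefore need to show carefully that the two auxiliary caps $u(s)-d_i/w(s)$ and $d_i/w(s)-\ell(s)$ inside the definition of $\delta_{i+1}$ precisely prevent the algorithm from ever over-shooting the smallest feasible norm, so that in the comparison at step $q$ only elements of $(F^*\triangle F_q)\cap S_q$ that actually contribute a full $\delta_{q+1}/w(s)$ term appear on both sides. Handling this bookkeeping cleanly -- and thereby identifying the cap from the first minimum in the algorithm as the binding one in the contradiction step -- is the delicate part of the proof.
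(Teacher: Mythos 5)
Your plan matches the paper's proof in all essentials: the running-time bound via the integer-valued potential $\frac{1}{w}\big((F_i\setminus F^*)\cap S_i\big)-\frac{1}{w}\big((F_i\cap F^*)\cap S_i\big)$ combined with the at-most-$n$ shrinkings of $S_i$ is exactly the paper's argument, and the infeasibility branch is handled identically through Lemma~\ref{lem:inv_min_cost_infinity_n&s_cond_for_infeas}. The optimality step is a minor variant — you take the first index $q$ with $d_{q+1}>\delta^*$ and bound $\delta_{q+1}\le\delta^*-d_q$ from feasibility of $\p{\delta^*}{\ell,u}{w}$ at $F_q$, whereas the paper takes the last index and derives $0>0$ — but both rest on Lemmas~\ref{lem:inv_min_cost_infinity_delta} and~\ref{lem:inv_min_cost_infinity_observation_on_delta} and the clipping bookkeeping you flag does go through (clipped coordinates contribute a change of at most $(\delta^*-d_q)/w(s)$, which is all the inequality needs), so the proposal is correct.
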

\begin{proof}
We discuss the time complexity and the correctness of the algorithm separately.
\medskip

\noindent \emph{Time complexity.} Recall that $w\in\mathbb{R}^S_+$ is scaled so that $\frac{1}{w}(X)$ is an integer for each $X\subseteq S$. Between two iterations of the while loop, the size of the set $S_i$ or the value of $\frac{1}{w} \big( (F_i\setminus F^*) \cap S_i \big) - \frac{1}{w} \big( (F_i \cap F^*) \cap S_i \big)$ strictly decreases by Lemma~\ref{lemma:inv_min_cost_infinity_for_poly_time}. The size of $S_i$ can decrease at most $n$ times. Between two iterations where the size of $S_i$ decreases, the value of $\frac{1}{w} \big( (F_i\setminus F^*) \cap S_i \big) - \frac{1}{w} \big( (F_i \cap F^*) \cap S_i \big)$ can decrease at most $2\|w\|_{-1}$ times. Hence the total number of iterations is $O(n\cdot \|w\|_{-1})$.
\medskip

\noindent \emph{Correctness.} By the above, the procedure terminates after a finite number of iterations. First, we show that if the the algorithm returns \texttt{Infeasible}, then it correctly recognizes the problem to be infeasible. To see this, assume that the algorithm terminated in the $i$th step and declared the problem to be infeasible. Then $(F^* \triangle F_i) \cap S_i = \emptyset$, so by the definitions of $S_i$ and $m$ as in Lemma~\ref{lem:inv_min_cost_infinity_n&s_cond_for_infeas}, we obtain
\begin{align*}
&(c - \p{m}{\ell,u}{w})(F^*) - (c - \p{m}{\ell,u}{w})(F_i) \\[2pt]
{}&{}~~= 
c(F^*) - c(F_i) - \sum_{\substack{s \in F^*\setminus F_i \\ u(s) \ne + \infty}} u(s) - \sum_{\substack{s \in F^*\setminus F \\ u(s) = + \infty}} \frac{m}{w(s)} + \sum_{\substack{s \in F\setminus F^* \\ \ell(s) \ne - \infty}} \ell(s) + \sum_{\substack{s \in F\setminus F^* \\ \ell(s) = - \infty}} \left( - \, \frac{m}{w(s)} \right) \\[2pt]
{}&{}~~= 
c(F) - c(F_i) - \sum_{s \in F^*\setminus F_i} u(s) + \sum_{s \in F_i\setminus F^*} \ell(s)\\[2pt]
{}&{}~~= 
(c - \p{d_i}{\ell, u}{w})(F^*) - (c - \p{d_i}{\ell, u}{w})(F_i)\\[2pt]
{}&{}~~> 
0,
\end{align*}
hence the problem is infeasible by Lemma~\ref{lem:inv_min_cost_infinity_n&s_cond_for_infeas}.

Assume now that the algorithm terminates with returning a deviation vector whose feasibility follows from the fact that the while loop ended. If $F^*$ is a minimum $c_0$-cost member of $\cF$, then we are clearly done. Otherwise, there exists an index $q$ such that $F^*$ is a minimum $c_{q+1}$-cost member of $\cF$. Suppose to the contrary that $\p{d_{q+1}}{\ell, u}{w}$ is not optimal. By Lemma~\ref{lem:inv_min_cost_infinity_delta}, there exists $\delta<d_{q+1}$ such that the deviation vector $\p{\delta}{\ell, u}{w}$ is optimal. Thus, by Lemma~\ref{lemma:inv_min_cost_infinity_correction}, we get
\begin{align*}
0
{}&{} \ge 
(c - \p{\delta}{\ell, u}{w})(F^*) - (c - \p{\delta}{\ell, u}{w})(F_q) \\[2pt]
{}&{}= 
c(F^*) - c(F_q) - \sum_{\makemathbox[.8\width]{\substack{s \in F^*\setminus F_q \\ \delta/w(s) \le u(s)}}} \frac{\delta}{w(s)} - \sum_{\makemathbox[.8\width]{\substack{s \in F^*\setminus F_q \\ u(s) < \delta/w(s)}}} u(s) + \sum_{\makemathbox[.8\width]{\substack{s \in F_q\setminus F^* \\ - \delta/w(s) < \ell(s)}}} \ell(s) + \sum_{\makemathbox[.8\width]{\substack{s \in F_q\setminus F^* \\ \ell(s) \le - \delta/w(s)}}} \left( - \, \frac{\delta}{w(s)} \right) \\[2pt]
{}&{}> 
c(F^*) - c(F_q) - \hspace{-0.2cm}\sum_{\makemathbox[.8\width]{\substack{s \in F^*\setminus F_q \\ d_{q+1}/w(s) \le u(s)}}} \frac{d_{q+1}}{w(s)} - \hspace{-0.2cm}\sum_{\makemathbox[.8\width]{\substack{s \in F^*\setminus F_q \\ u(s) < d_{q+1}/w(s)}}} u(s) + \hspace{-0.2cm}\sum_{\makemathbox[.8\width]{\substack{s \in F_q\setminus F^* \\ - d_{q+1}/w(s) < \ell(s)}}} \ell(s) + \hspace{-0.2cm}\sum_{\makemathbox[.8\width]{\substack{s \in F_q \setminus F^* \\ \ell(s) \le - d_{q+1}/w(s)}}} \left( - \, \frac{d_{q+1}}{w(s)} \right) \\[2pt]
{}&{}= 
c_{q+1}(F^*) - c_{q+1}(F_q)\\[2pt]
{}&{}=
0,
\end{align*}
a contradiction. This concludes the proof of the theorem.
\end{proof}

Note that the Algorithm~\ref{algo:inv_min_cost_infinity} runs in strongly polynomial time assuming that $\cO$ can be realized by a strongly polynomial-time algorithm.

\subsection{Multiple cost functions}
\label{sec:multi_infty}

Consider now an instance $\big( S, \cF, F^*, \{c^j\}_{j\in[k]}, \ell, u, \|\cdot\|_{\infty,w} \big)$ of the problem with multiple cost functions. By Lemma~\ref{lem:inv_min_cost_infinity_delta}, for each $j\in[k]$, there exists $\delta^j\geq 0$ such that $\p{\delta^j}{\ell, u}{w}$ is an optimal deviation vector for the problem $\big( S, \cF, F^*, c^j, \ell, u, \|\cdot\|_{\infty, w} \big)$. Let $\delta\coloneqq\max \big\{ \delta^j\mid j\in[k] \big\}$. By Lemma~\ref{lem:inv_min_cost_infinity_observation_on_delta}, $\p{\delta}{\ell, u}{w}$ is a feasible deviation vector for the problem $( S, \cF, F^*, c^j, \ell, u, \|\cdot\|_{\infty, w})$ for $j\in[k]$, and it is clearly optimal. Therefore, we get the following.

\begin{cor} 
The minimum-cost inverse optimization problem\hspace{1pt} $\big( S, \, \cF, \, F^*, \, \{c^{\, j} \}_{j \, \in \, [k]}, \, \ell, \, u, \allowbreak {\| \cdot\|_{\infty,w}} \big)$ with multiple cost functions can be solved using $O(k\cdot n\cdot \|w\|_{-1})$ calls to oracle $\cO$.
\end{cor}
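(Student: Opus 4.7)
The plan is to reduce the multi-cost instance to $k$ single-cost instances, solve each by Algorithm~\ref{algo:inv_min_cost_infinity}, and then combine the answers by taking the maximum of the individual thresholds. Concretely, for each $j\in[k]$, I would run Algorithm~\ref{algo:inv_min_cost_infinity} on the instance $( S, \cF, F^*, c^j, \ell, u, \|\cdot\|_{\infty, w})$. By Theorem~\ref{thm:inv_min_cost_infinity_correctness_of_algo}, each such run uses $O(n\cdot \|w\|_{-1})$ calls to $\cO$ and either declares that single-cost problem infeasible (in which case the multi-cost problem is also infeasible, since any feasible deviation vector for $\{c^j\}_{j\in[k]}$ is feasible for each $c^j$) or returns an optimal deviation vector of the form $\p{\delta^j}{\ell,u}{w}$ for some $\delta^j\geq 0$. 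The candidate output is then $\p{\delta}{\ell,u}{w}$ with $\delta\coloneqq \max\{\delta^j\mid j\in[k]\}$.

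For correctness, feasibility of $\p{\delta}{\ell,u}{w}$ with respect to every $c^j$ follows from Lemma~\ref{lem:inv_min_cost_infinity_observation_on_delta}, applied with $\delta^j\leq \delta$. For optimality, let $p$ be any feasible deviation vector for the multi-cost problem. Then $p$ is feasible for each single-cost instance $( S, \cF, F^*, c^j, \ell, u, \|\cdot\|_{\infty, w})$, so by the optimality of $\delta^j$ (guaranteed by Theorem~\ref{thm:inv_min_cost_infinity_correctness_of_algo} together with Lemma~\ref{lem:inv_min_cost_infinity_delta}) we obtain $\|p\|_{\infty,w}\geq \delta^j$ for all $j\in[k]$, hence $\|p\|_{\infty,w}\geq \delta=\|\p{\delta}{\ell,u}{w}\|_{\infty,w}$. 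The bound on $\mathcal{O}$-calls is immediate: we perform $k$ runs of Algorithm~\ref{algo:inv_min_cost_infinity}, each with $O(n\cdot \|w\|_{-1})$ oracle queries, for a total of $O(k\cdot n\cdot \|w\|_{-1})$.

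There is no real obstacle; the argument follows the same pattern as the corresponding statement for the weighted bottleneck Hamming distance, with Lemmas~\ref{lem:inv_min_cost_infinity_delta} and~\ref{lem:inv_min_cost_infinity_observation_on_delta} playing exactly the roles that Lemmas~\ref{lem:inv_min_cost_bottleneck_delta} and~\ref{lem:inv_min_cost_bottleneck_observation_on_delta} played there. The only point that deserves a brief sanity check is that $\|\p{\delta}{\ell,u}{w}\|_{\infty,w}$ is indeed equal to $\delta$ (rather than strictly smaller) once the deviation vector is feasible and $\delta>0$; but this is clear from the piecewise definition of $\p{\delta}{\ell,u}{w}$, since at least one coordinate attains $\pm\delta/w(s)$ unless all coordinates are clipped by the bounds $\ell, u$, in which case the maximum $\delta^j$ is already reached at a coordinate that is not clipped in the $j$-th instance.
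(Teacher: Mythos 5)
Your proposal is correct and matches the paper's argument: run the single-cost algorithm for each $c^j$ to obtain the thresholds $\delta^j$, take $\delta=\max_j\delta^j$, and invoke Lemma~\ref{lem:inv_min_cost_infinity_observation_on_delta} for feasibility and the single-cost optimality of each $\delta^j$ for the matching lower bound. The closing ``sanity check'' is not actually needed, since optimality already follows from $\|\p{\delta}{\ell,u}{w}\|_{\infty,w}\le\delta\le\|p\|_{\infty,w}$ for every feasible $p$ of the multi-cost instance.
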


\subsection{Min-max theorem}
\label{sec:min-max_infty}

With the help of Lemmas~\ref{lem:inv_min_cost_infinity_delta} and~\ref{lem:inv_min_cost_infinity_observation_on_delta}, we provide a min-max characterization for the weighted infinity norm of an optimal deviation vector in the unconstrained setting, even for the case of multiple cost functions. Recall that we use the notation $\frac{1}{w}(X)\coloneqq \sum \left\{ \frac{1}{w(s)} \, \middle| \, s\in X \right\}$.

\begin{thm} \label{thm:inv_min_cost_infinity_minmax}
Let $\big( S, \cF, F^*, \{ c^j \}_{j \in [k]},-\infty,+\infty, \| \cdot \|_{\infty, w} \big)$ be a feasible minimum-cost inverse optimization problem with multiple cost functions. Then
\begin{gather*}
\min \big\{\| p \|_{\infty, w} \bigm| \text{$p$ is a feasible deviation vector} \big\} \\
= \max \left\{0, \, \max \left\{\frac{c^j(F^*) - c^j(F)}{\frac{1}{w}(F^* \triangle F)}\,\middle|\, j\in[k],\, F \in \cF , \, F \ne F^* \right\}\right\}.
\end{gather*}
\end{thm}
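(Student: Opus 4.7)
The plan is to establish both inequalities separately, using the characterization of special-form deviation vectors given by Lemma~\ref{lem:inv_min_cost_infinity_delta}. Note that when $\ell \equiv -\infty$ and $u \equiv +\infty$, the definition of $\p{\delta}{}{w}$ simplifies considerably: $\p{\delta}{}{w}(s) = \delta/w(s)$ if $s \in F^*$, and $\p{\delta}{}{w}(s) = -\delta/w(s)$ if $s \in S\setminus F^*$, so $\|\p{\delta}{}{w}\|_{\infty,w} = \delta$ (or $0$ when $\delta = 0$). This will be the candidate optimal deviation vector in the harder direction.

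For the $\min \geq \max$ direction (weak duality), I would fix any feasible deviation vector $p$ and any pair $(j, F)$ with $F \in \cF$, $F \neq F^*$. Feasibility of $p$ with respect to $c^j$ gives $(c^j - p)(F^*) \leq (c^j - p)(F)$, which rearranges to $c^j(F^*) - c^j(F) \leq p(F^*) - p(F) = \sum_{s \in F^*\setminus F} p(s) - \sum_{s \in F\setminus F^*} p(s)$. Bounding the right-hand side by $\sum_{s \in F^* \triangle F} |p(s)| \leq \|p\|_{\infty,w} \cdot \frac{1}{w}(F^* \triangle F)$ and dividing by the (strictly positive) quantity $\frac{1}{w}(F^* \triangle F)$ yields the desired inequality. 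Together with the trivial bound $\|p\|_{\infty,w} \geq 0$, this gives one side.

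For the $\min \leq \max$ direction, let $\delta^*$ denote the right-hand side of the claimed identity, and set $p \coloneqq \p{\delta^*}{}{w}$. The key computation is that for any $j \in [k]$ and $F \in \cF$,
\begin{equation*}
p(F^*) - p(F) = \sum_{s \in F^* \setminus F} \frac{\delta^*}{w(s)} + \sum_{s \in F\setminus F^*} \frac{\delta^*}{w(s)} = \delta^* \cdot \tfrac{1}{w}(F^* \triangle F).
\end{equation*}
If $F = F^*$ then both sides of the feasibility inequality $c^j(F^*) - c^j(F) \leq p(F^*) - p(F)$ vanish; otherwise $F^* \triangle F$ is nonempty so $\frac{1}{w}(F^* \triangle F) > 0$, and the defining choice of $\delta^*$ ensures $\delta^* \geq (c^j(F^*) - c^j(F))/\frac{1}{w}(F^* \triangle F)$, which is exactly what is needed. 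Hence $p$ is feasible for every $c^j$ with $\|p\|_{\infty,w} = \delta^*$, finishing the proof.

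The argument is largely bookkeeping once the simplified form of $\p{\delta}{}{w}$ in the unconstrained setting is identified; the only mild subtlety is handling the edge case $\delta^* = 0$ (where $p \equiv 0$ and feasibility holds because every term in the max is nonpositive) and the degenerate case $F = F^*$ (excluded from the max but needed for feasibility), both of which are immediate.
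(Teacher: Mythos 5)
Your proof is correct, and it differs from the paper's in a way worth noting. The paper does not prove the lower bound ($\min\ge\max$) directly: it first invokes Lemma~\ref{lem:inv_min_cost_infinity_delta} to get, for each $j$, an optimal special-form vector $\p{\delta^j}{}{w}$ for the single-cost problem, sets $\delta=\max_j\delta^j$ (a lower bound on the multi-cost optimum because any multi-feasible $p$ is feasible for each single problem), uses Lemma~\ref{lem:inv_min_cost_infinity_observation_on_delta} to conclude that $\p{\delta}{}{w}$ is feasible, and only then extracts $\delta\ge d$ from the feasibility inequality applied to $\p{\delta}{}{w}$. You instead prove weak duality in one line for an \emph{arbitrary} feasible $p$, via $c^j(F^*)-c^j(F)\le p(F^*)-p(F)\le\sum_{s\in F^*\triangle F}|p(s)|\le\|p\|_{\infty,w}\cdot\frac{1}{w}(F^*\triangle F)$; this is exactly the "intuition" the paper states but does not carry out, and it makes the lower bound self-contained, independent of Lemmas~\ref{lem:inv_min_cost_infinity_delta} and~\ref{lem:inv_min_cost_infinity_observation_on_delta}. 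The upper bound is essentially identical in both treatments: exhibit $\p{d}{}{w}$ and verify feasibility from the definition of $d$. What the paper's detour buys is consistency with the algorithmic machinery already developed (the same lemmas drive Algorithm~\ref{algo:inv_min_cost_infinity} and the multiple-cost corollary); what your route buys is a shorter, more elementary argument that also makes transparent why the maximum is attained. Your handling of the edge cases ($d\le 0$, hence $p\equiv 0$; and $F=F^*$, where $\frac{1}{w}(F^*\triangle F)=0$ but feasibility is trivial) is exactly what is needed.
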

\begin{proof}
By Lemma~\ref{lem:inv_min_cost_infinity_delta}, for each $j\in[k]$ there exists $\delta^j\geq 0$ such that $\p{\delta^j}{}{w}$ is an optimal deviation vector for the problem $( S, \cF, F^*, c^j,-\infty,+\infty, \| \cdot \|_{\infty, w})$. Our goal is to show that $\p{\delta}{}{w}$ is an optimal deviation vector for the multiple-cost variant, where $\delta\coloneqq \max \big\{ \delta^j \bigm| j\in[k] \big\}$.

Let $p$ be an optimal deviation vector. For ease of discussion, let us define
\begin{equation*}
d\coloneqq \max \left\{\frac{c^j(F^*) - c^j(F)}{\frac{1}{w}(F^* \triangle F)}\,\middle|\, j\in[k],\, F \in \cF , \, F \ne F^* \right\}.
\end{equation*}
The intuition behind the definition of this value is as follows: if the $c^j$-cost of a set $F$ is smaller than that of $F^*$, then the weighted $\ei$-norm of a feasible deviation vector is clearly lower bounded by the fraction appearing in the expression. 

If $F^*$ is a minimum $c^j$-cost member of $\cF $ for each $j\in[k]$, then we are clearly done. Otherwise, $\delta, d > 0$ holds, and it suffices to show $\delta = d$. Let $j\in [k]$ and $F \in \cF $, $F \ne F^*$ be arbitrary. Since $\delta \ge \delta^j$, Lemma~\ref{lem:inv_min_cost_infinity_observation_on_delta} implies that $\p{\delta}{}{w}$ is feasible, thus 
\begin{align*}
0 
{}&{}\ge 
(c^j - \p{\delta}{}{w})(F^*) - (c^j - \p{\delta}{}{w})(F) \\[2pt]
{}&{}= 
\left( c^j(F^*) - \sum_{s \in F^*} \frac{\delta}{w(s)} \right) - \left( c^j(F) - \sum_{s \in F \cap F^*} \frac{\delta}{w(s)} - \sum_{s \in F\setminus  F^*} \left( - \, \frac{\delta}{w(s)} \right) \right) \\[2pt]
{}&{}= 
c^j(F^*) - c^j(F) - \sum_{s \in F^* \triangle F} \frac{\delta}{w(s)} \\[2pt]
{}&{}=
c^j(F^*) - c^j(F) - \delta \cdot \tfrac{1}{w}(F^* \triangle F).
\end{align*}
This implies 
\begin{equation*}
\delta \ge \frac{c^j(F^*) - c^j(F)}{\frac{1}{w}(F^* \triangle F)},
\end{equation*}
hence $\delta \ge d$. To prove $\delta \le d$, it is enough to show that $\p{d}{}{w}$ is a feasible deviation vector.
 For any $j\in[k]$ and for any $F \in \cF$, $F \ne F^*$, we have
\begin{align*}
(c^j - \p{d}{}{w})(F^*) - (c^j - \p{d}{}{w})(F) 
{}&{}= 
c^j(F^*) - c^j(F) - d \cdot \tfrac{1}{w}(F^* \triangle F) \\
{}&{}\le 
c^j(F^*) - c^j(F) - \frac{c^j(F^*) - c^j(F)}{\frac{1}{w}(F^* \triangle F)} \cdot \tfrac{1}{w}(F^* \triangle F)\\
{}&{}= 
0,
\end{align*}
which means that $\p{d}{}{w}$ is indeed feasible.
\end{proof}

\section{Conclusions}
\label{sec:conc}

In this paper, we considered general minimum-cost inverse optimization problems in the constrained setting, i.e.\ with lower and upper bounds on the coordinates of the deviation vector. We provided simple, purely combinatorial algorithms for the weighted bottleneck Hamming distance and $\ei$-norm objectives. The algorithms follow a scheme that resembles Newton's algorithm, and find an optimal deviation vector in strongly polynomial when the bottleneck Hamming distance, and in pseudo-polynomial time when the $\ei$-norm is considered. For both objectives, we extended the results extend to inverse optimization problems with multiple cost functions.

Despite the extensive literature on inverse optimization, only few results are known when the desired deviation vector $p$ is required to be integral, see e.g.~\cite{ahmadian2018algorithms, frank2021inverse, frank2021discrete}. If $\ell$, $u$ and $c$ are integral vectors, then the deviation vector $\p{\delta}{\ell, u}{w}$ defined in the bottleneck Hamming distance case is integral independently from the choice of $\delta$, therefore the fractional and integral optimums coincide. For the unweighted $\ei$-norm objective, i.e.\ when in addition $w\equiv 1$ holds, the deviation vector $\p{\delta}{\ell, u}{w}$ might not be integral as we assign value $\delta$ to some of its coordinates. However, Lemmas~\ref{lem:inv_min_cost_infinity_delta} and~\ref{lem:inv_min_cost_infinity_observation_on_delta} together imply that if $\p{\delta}{\ell, u}{w}$ is an optimal fractional deviation vector, then $\p{\lceil\delta\rceil}{\ell, u}{w}$ is an optimal integral deviation vector. 

Though the proposed algorithm is capable of solving minimum cost inverse optimization problems in a very general setting, it naturally has its limitations. In particular, it is not suitable for handling deeper connections between the coordinates of the cost function. For example, if the underlying optimization problem is a minimum cost $s$-$t$ path problem in a directed graph with conservative arc-costs $c$, then it is not clear how to implement the algorithm as to maintain conservativeness throughout.

In an accompanying paper~\cite{berczi2023span}, we consider minimum-cost inverse optimization problems under the \emph{weighted span objective}, and provide a min-max characterization for the weighted span of an optimal deviation vector in the unconstrained setting, as well as an efficient algorithm for the constrained setting.

\paragraph{Acknowledgement.} The work was supported by the Lend\"ulet Programme of the Hungarian Academy of Sciences -- grant number LP2021-1/2021 and by the Hungarian National Research, Development and Innovation Office -- NKFIH, grant number FK128673.

\bibliographystyle{abbrv}
\bibliography{part1.bib}

\end{document}